\definecolor{dgreen}{RGB}{49,128,23}
\definecolor{nicepink}{RGB}{255, 0, 102}
\definecolor{nicered}{RGB}{255, 80, 80}
\definecolor{goldy}{RGB}{230, 166, 2}
\renewcommand{\b}{\bm}
\newcommand{\bR}{\mathbb{R}}
\newcommand{\cC}{\mathcal{C}}
\newcommand{\cE}{\mathcal{E}}
\newcommand{\cG}{\mathcal{G}}
\newcommand{\cK}{\mathcal{K}}
\newcommand{\cR}{\mathcal{R}}
\newcommand{\cS}{\mathcal{S}}
\newcommand{\cV}{\mathcal{V}}
\newcommand{\cW}{\mathcal{W}}
\newcommand{\ka}{k}
\newtheorem{definition}{Definition}[section]
\newtheorem{lemma}{Lemma}[section]
\newtheorem{theorem}{Theorem}[section]
\newtheorem{remark}{Remark}[section]
\newtheorem{example}{Example}[section]
\newtheorem{question}{Question}
\begin{document}
\small

\title{On classes of reaction networks and their associated polynomial dynamical systems}
\author{David F. Anderson\thanks{Department of Mathematics, University of Wisconsin-Madison, anderson@math.wisc.edu}, James D. Brunner\thanks{Division of Surgical Research, Department of Surgery, Mayo Clinic, brunner.james@mayo.edu}, Gheorghe Craciun\thanks{Departments of Mathematics and Biomolecular Chemistry, University of Wisconsin-Madison, craciun@wisc.edu}, and Matthew D. Johnston\thanks{Department of Mathematics \& Computer Science, Lawrence Technological University, mjohnsto1@ltu.edu}}
\date{\today}
\maketitle

\begin{abstract}
\small
In the study of reaction networks and the polynomial dynamical systems that they generate, special classes of networks with important properties have been identified. These include \emph{reversible}, \emph{weakly reversible}, and, more recently, \emph{endotactic} networks. While some inclusions between these network types are clear, such as the fact that all reversible networks are weakly reversible, other relationships are more complicated. Adding to this complexity is the possibility that inclusions be at the level of the dynamical systems generated by the networks rather than at the level of the networks themselves. We completely characterize the inclusions between reversible, weakly reversible, endotactic, and strongly endotactic network, as well as other less well studied network types. In particular, we show that every strongly endotactic network in two dimensions can be generated by an extremally weakly reversible network. We also introduce a new class of \emph{source-only} networks, which is a computationally convenient property for networks to have, and show how this class relates to the above mentioned network types.
\end{abstract}

\noindent {Keywords: Reaction Networks, Polynomial Dynamical Systems}  \newline {AMS Subject Classifications: 34C20, 37N25,80A30,92C42,92C45} 
\\

% \noindent {\bf Comment colors:}
% \begin{itemize}
% \item \textcolor{violet}{Jim - purple}
% \item \textcolor{blue}{Dave - blue}
% \item \textcolor{green}{George - Green}
% \item \textcolor{orange}{Matthew - orange}
% \end{itemize}

\bigskip

\section{Introduction}
\label{introduction}

\emph{Chemical reaction networks} model the behavior of sets of reactants, usually termed \emph{species}, that interact at specified rates to form sets of products. Under simplifying assumptions such as (i) a well-mixed reaction vessel, (ii) a sufficiently large number of reactants, and (iii) mass action kinetics, the dynamics of the concentrations of the species can be modeled by a system of autonomous polynomial ordinary differential equations. Such  systems are  known as \emph{mass action systems}.

With the increased recent interest in systems biology, significant attention has been given to the question of how dynamical properties of a mass action system can be inferred from the structure of the network of interactions that it models. In particular, it is of great interest to identify network structures that inform the dynamics regardless of the choice of parameters for the model (which are often unknown, or only known up to order of magnitude). This is not a trivial endeavor, as kinetic systems based on reaction networks are known to permit a wide variety of dynamical behaviors, including asymptotic convergence to a unique steady state \citep{H-J1}, multistationarity \citep{C-F1,C-F2}, periodicity and Hopf bifurcations \citep{W-H1,W-H2}, and chaotic behavior \citep{E-T}. Nevertheless, structural properties that have strong implications for the corresponding dynamical systems have been found.  See B.L. Clarke, \emph{Stability of complex reaction networks} \citep{clarke1980stability} for a thorough description of the classical problems and the connections between structural properties and stability of a network.

 In the papers \citep{F1,H,H-J1}, Feinberg, Horn, and Jackson introduced the now-classical notion of network deficiency and proved that weak reversibility and a deficiency of zero suffice for characterizing the steady state and local convergence properties of the corresponding mass action system. Moreover, their results hold regardless of the choice of parameters.  These papers are commonly credited as providing the framework for so-called \emph{chemical reaction network theory} \citep{Fe2,Fe4,Sh-F,F2,F3}. Chemical reaction network theory remains an active area of research to date, with a focus on both deterministic \citep{A,A4,ACKN2018, A2011bounded,A-S,yu2018,D-B-M-P,C-D-S-S,J-S7,B-P,Conradi19175,Hell2015DynamicalFO, craciun2019realizations} and stochastic models \citep{AK2018, AN2019, ACKN2018, ACKK2018, AY2019,A-C-K, AC2016, ACGW:lyapunov, dembo2018, CW:product, AM2018, AEJ2014}.  

We note that there are other methods in the literature that focus on the network structure of chemical interactions.  This includes work related to the discovery of new reactions \citep{herges1990reaction}, and on the understanding of the quantum physical and energetic properties of chemical reaction paths  \citep{mezey1982quantum,mezey1982topology,mezey1995reaction}. However, in the present work we focus on abstract mathematical models of chemical reaction networks defined as in the works of Feinberg, Horn, Jackson, and Clarke \citep{clarke1980stability,F1,H,H-J1} and do not consider mechanisms beyond the assumption of mass action.

Chemical reaction network theory can be applied either through direct knowledge or hypothesis of a network of interactions, or by constructing such a network from a system of ordinary differential equations with polynomial right hand sides. In either case, the network in question may have no special properties that can be used to draw conclusions. However, considering \emph{dynamical equivalence} may allow the application of a result that is not obviously relevant \citep{C-P,J-S2,Sz2,craciun2018}. Dynamical equivalence concerns the case of two distinct chemical reaction networks taken with mass action kinetics having identical governing systems of differential equations. It is simple to observe that not all dynamically equivalent network representations share the same structural properties. For example, consider the network
\begin{equation}\label{eq:9807070}
\xymatrix{
	\emptyset \ar[r]^{k_1} & X & \ar[l]_{k_2} 2X.
}
\end{equation}
This network is not weakly reversible (see \cref{classificationssection}). Under mass action kinetics, however, we may easily check that the network
\begin{equation}\label{eq:98}
\xymatrix{
	\emptyset \ar@<0.5ex>[r]^{\nicefrac{k_1}{2}} & \ar@<0.5ex>[l]^{\nicefrac{k_2}{2}} 2X
}
\end{equation}
generates exactly the same differential equation, $\dot{x} = k_2 - k_1x^2$, and therefore is a dynamically equivalent representation. This network, however, is reversible and therefore weakly reversible. Thus, existing theory may be used to immediately characterize the long-time behavior of the  dynamical system that is associated with both systems.

In the simple example introduced above, the notion of dynamical equivalence allowed us to make a conclusion about the long term behavior of a dynamical system associated with a network that did not appear to fit the hypothesis of the classical theorems of chemical reaction network theory. Furthermore, any model development and fitting from data must account for dynamical equivalence \citep{craciun2013statistical,C-P}. The notion of dynamical equivalence therefore plays an essential role in the study of mass action reaction networks. We therefore ask the following question:
\begin{question}\label{q1}
Are there easily checkable (geometric) conditions under which two networks are dynamically equivalent (in that they generate the same system of differential equations)?
\end{question}

A recent addition to the class of networks for which results can be obtained is the class of \emph{endotactic networks}, which include reversible and weakly reversible networks as subclasses.  Endotactic networks were first introduced  in Craciun et al. \citep{C-N-P} and, roughly speaking, a network is endotactic if the reactions of the network are ``inward-pointing'' in relation to the convex hull of the source nodes when the network is embedded in $\bR^d_{\geq 0}$.  See \cref{classificationsEG} for a precise formulation.  
 The deterministic dynamical systems corresponding to endotactic networks are conjectured to have positive solutions which are bounded and strictly positive for all time under  mild conditions on the reaction kinetics \citep{C-N-P}. This conjecture is known to be true in special cases, including when the network's stoichiometric subspace is two-dimensional or less \citep{Pa} and when the network satisfies an additional condition to make it \emph{strongly endotactic} \citep{ACKN2018, Gopal2014}. 

What is not known is exactly how endotactic networks fit into the hierarchy of well-studied network classifications such as reversible networks, weakly reversible networks, single linkage class networks, networks with a single terminal linkage class, and consistent networks. This is a non-trivial question in the context of mass action kinetics given recent work on dynamical equivalence. In fact there are many endotactic networks, including that shown in \cref{eq:9807070} for which we can find a dynamically equivalent weakly reversible network. 
We therefore ask the following question:
\begin{question}\label{q2}
Given the flexibility afforded by dynamical equivalence, how closely related are endotactic networks to the well-studied classifications of reversible, weakly reversible, and consistent networks?
\end{question}

Additionally, we introduce the notion of ``source-only networks" and show how endotactic and strongly endotactic networks relate to this class of networks. 

To answer both questions, we introduce a general framework in which to consider dynamical equivalence, including defining a reaction network as an embedded graph, called a Euclidean embedded graph (E-graph). This definition is equivalent to the classical definition found in the literature, notably Feinberg \citep{F3}, in the sense that it models the same dynamical system. 

In this paper, we show that, although significant overlaps exists, endotacticity is indeed distinct from weak reversibility. \cref{figure1}, \cref{figure2}, and \cref{figure3} give examples of endotactic and even strongly endotactic networks which cannot be realized as weakly reversible networks. We characterize overlap between these types of networks by analyzing the notion of ``dynamical equivalence" (\cref{dyneqdef}, \cref{overlap}, and \cref{includedyn}) under which distinct networks may give rise to the same dynamical systems. We also give checkable conditions for dynamical equivalence (\cref{containcondit}). We show that in two dimensions, strong endotacticity is equivalent to weak reversibility on an important subset of the nodes of the network (\cref{wkrev}), but \cref{figure3} gives a counterexample in three dimensions.

\cref{relationships} summarizes our results by giving a succinct summary of the relationships between classifications of networks. Moreover,  \cref{relationships} is complete in the sense that any additional paths would be false.

\section{Background}

In this section, we introduce background notation and results related to \emph{chemical reaction network theory} and \emph{mass action systems}, in particular.

\subsection{Chemical Reaction Networks}
Classically, a reaction network has been defined as below \citep{F3}:
\begin{definition}\label{SCR}
A \textbf{chemical reaction network} is a triple of finite sets $(\mathcal{S},\mathcal{C},\mathcal{R})$ where:
\begin{enumerate}
\item
The \textbf{species set} $\mathcal{S} = \{ X_1, \ldots, X_d \}$ consists of the basic species/reactants capable of undergoing chemical change.
\item
The \textbf{complex set} $\mathcal{C} = \{ C_1, \ldots, C_n \}$ consists of linear combinations of species of the form
\[C_i = \sum_{j=1}^d y_{ij} X_j, \; \; \; \; \; i=1,\ldots,n.\]
The constants $y_{ij} \in \mathbb{R}_{\geq 0}$ are called \emph{stoichiometric coefficients} and determine the multiplicity of each species within each complex. We define the complex support vectors $\bm{y}_i = (y_{i1},y_{i2},\ldots,y_{id})$ and assume that each complex is stoichiometrically distinct, i.e. $\bm{y}_i \not= \bm{y}_j$ for $i\not= j$. For simplicity, we will allow the support vector $\bm{y}_i$ to represent the complex $C_i$.
\item
The \textbf{reaction set} $\mathcal{R} = \{ R_1, \ldots, R_r \}$ consists of elementary reactions of the form
\[R_k: \; \; \; \bm{y}_{\rho(k)} \longrightarrow \bm{y}_{\rho'(k)}, \; \; \; k=1, \ldots, r\]
where $\rho(k)=i$ if $\bm{y}_i$ is the reactant complex of the $k^{th}$ reaction, and $\rho'(k) = j$ if $\bm{y}_j$ is the product complex of the $k^{th}$ reaction. We require that $\rho(k) \ne \rho'(k)$ for each $k = 1,\ldots, r$. Reactions may alternatively be represented as ordered pairs of complexes, e.g. $R_k = (\bm{y}_i,\bm{y}_j)$ if $\bm{y}_i \to \bm{y}_j$ is in the network.
\end{enumerate}
\end{definition}

We present the preceding classical definition (\cref{SCR}) in order to connect our results to the bulk of the literature in chemical reaction network theory. In some recent work (see \citep{rrobust,Cr,gheorgheToricDI,boros2019weakly, craciun2019endotactic}), chemical reaction networks have been defined in terms of a \emph{Euclidean Embedded Graph (E-graph)}. In this paper, we prefer to use this newer formulation, given below in \cref{egraphCRN}, due to its convenient geometric properties.

\begin{definition}
	A \emph{Euclidean embedded graph} (E-graph) $\cG = (\cV,\cE)$ is a finite directed graph whose nodes $\cV$ are distinct elements of a finite set $Y \subset \mathbb{R}^d$.
\end{definition}

It is convenient to define for each edge $e\in \cE$ a \emph{source vector} $\bm{s}(e)\in Y$, the label of the source node of $e$, the \emph{target vector} $\bm{t}(e)\in Y$, the label of the target node, and the \emph{reaction vector} $\bm{v}(e) = \bm{t}(e) - \bm{s}(e)$. We may regard $\bm{s}(e)$ as the source complex of some reaction while $\bm{t}(e)$ is the product complex of that same reaction.  

Now we define a chemical reaction network to simply be an E-graph for which a set of simple conditions hold.

\begin{definition}\label{egraphCRN}
	A \emph{reaction network} is a Euclidean embedded graph, $(\cV,\cE)$, whose nodes $\cV$ are labeled with distinct elements of a finite set $Y \subset \mathbb{R}^d_{\geq 0}$, and for which the following conditions hold:
	\begin{enumerate}
	\item $\cV \ne \emptyset$;
	\item for each $y \in \cV$ there exists $e\in \cE$ for which $\bm t(e) = y$ or $\bm s(e) = y$;
	\item $\bm{t}(e) \ne \bm{s}(e)$ for each $e \in \cE$.  That is, we never have $\b v(e) = \b 0$.
	\end{enumerate}
\end{definition}

 \cref{SCR} and \cref{egraphCRN} of a chemical reaction network are equivalent in the following sense: if we regard the set $\cS$ as the standard basis in $\bR^d$, then the set of vertices $\cV$ and edges $\cE$ in \cref{egraphCRN} can be chosen to be the set of complexes $\cC$ and reactions $\cR$ in \cref{SCR}.  It is most common to assume that $Y \subset \mathbb{Z}^d_{\ge 0}$.  Further, it is convenient to enumerate the elements of $\cE$, so that $\cE  = \{e_1,\dots, e_{|\cE|}\}$.

\subsection{Mass Action Systems}\label{mass_action_sec}

In this paper, we will focus on dynamical systems that are generated by reaction networks according to \emph{mass action kinetics} \citep{H-J1,F3}. We will denote the vector whose $i$th component gives the concentration of the $i$th species at time $t$ by  $\mathbf{x}(t)  \in \mathbb{R}_{\geq 0}^d$.  As is usual, we will often drop the $t$ in the notation and simply denote the concentration by $\mathbf{x}.$  Also, for two vectors $\b{u},\b{v} \in \mathbb{R}_{\ge 0}^d,$ we will denote 
\[
\b{u}^{\b{v}} = \prod_{i=1}^d u_i^{v_i}
\]
where we take $0^0=1$.
A system is  said to have mass action kinetics if the rate associated to reaction $i$ is 
\[
   k_i \mathbf{x}^{\bm{s}(e_i)}
\]
for some constant $k_i >0$,  called the \emph{rate constant} of the reaction. That is, the rate of each reaction is assumed to be proportional to the product of the concentrations of the constituent reactants, counted according to multiplicity. For example, a reaction of the form $X_1+ X_2 \to \cdots$ would have rate equal to $k \mathbf{x}_1 \mathbf{x}_2$ for some $k>0$, and a reaction of the form $X_1 + 2X_2 \to \cdots$ would have rate  equal to $\tilde k \mathbf{x}_1 \mathbf{x}_2^2$, for some $\tilde k > 0$.  Other common kinetic assumptions, especially in systems biology, are Michaelis-Menten kinetics \citep{M-M} and Hill kinetics \citep{Hi}. Since reaction $i$ pushes the system in the direction $\bm{v}(e_i)$, we have the following.

\begin{definition}\label{def:generates}
Given a reaction network $\cG = (\cV,\cE)$ as in \cref{egraphCRN} and, after enumerating $\cE$, a choice of rate constants $\cK = \{k_1,...,k_{|\cE|}\}\subset \bR_{>0}$, we say that $\cG$ \emph{generates} the  dynamical system $\cG(\cK)$ 
\begin{equation}\label{gener}
\frac{d\mathbf{x}}{dt} = \sum_{i = 1}^{|\cE|} k_{i} \mathbf{x}^{\bm{s}(e_i)}\bm{v}(e_i).
\end{equation}
\end{definition}

We will use the notation $\bm{f}_{\cG(\cK)}(\mathbf{x})$ to refer to the right hand side of the dynamical system in \cref{gener}. It is clear from \cref{gener} that every mass action system has the properties that $\frac{d\mathbf{x}}{dt} \in S = \mathit{Span}\{\bm{v}(e)|e\in \cE\}$. Consequently, solutions of \cref{gener} are restricted to \emph{stoichiometric compatibility classes} $(\mathbf{x}_0 + S) \cap \mathbb{R}_{\geq 0}^d$ \citep{V-H}.

As we noted in the introduction, different Euclidean embedded graphs (combined with  choices of rate constants) can generate the same polynomial dynamical system.

\subsection{Network Classifications}
\label{classificationssection}

A key feature of chemical reaction network theory is the attempt to relate dynamical properties of kinetic systems, and in particular mass action systems, to structural properties of the underlying reaction graphs. We therefore introduce the following foundational structural properties of chemical reaction networks.

\begin{definition}
	\label{classificationsEG}
	Consider a  reaction network $\cG = (\cV,\cE)$, where $\cE$ has been enumerated. The graph $\cG$ is said to be:
	\begin{enumerate}
		\item
		\textbf{consistent} if there is some choice of $a_1,a_2,...,a_{|\cE|} \in \mathbb{R}_{>0}$ such that $0 = \sum_{i=1}^{|\cE|} a_i \bm{v}(e_i)$. 
		\item
		\textbf{weakly reversible} if each connected component of the graph is strongly connected,  or, equivalently, each edge $e \in \cE$ is contained in a cycle.
		\item
		\textbf{endotactic} if, for every $\bm{w} \in \mathbb{R}^d$ and every $e_i \in \mathcal{E}$, $\bm{w} \cdot \bm{v}(e_i) < 0$ implies that there exists $e_j \in \cE$ such that 
		$\bm{w} \cdot (\bm{s}(e_j)-\bm{s}(e_i)) < 0$ and $\bm{w} \cdot \bm{v}(e_j) > 0$.
				\item 
		\textbf{strongly endotactic} if, for every $\bm{w} \in \mathbb{R}^d$ and every $e_i \in \mathcal{E}$, $\bm{w} \cdot \bm{v}(e_i) < 0$ implies that there exists $e_j\in\cE$ such that $\bm{w} \cdot (\bm{s}(e_j)-\bm{s}(e_i)) < 0$ and $\bm{w} \cdot \bm{v}(e_j) > 0$ and furthermore $\bm{w} \cdot (\bm{s}(e_j)-\bm{s}(e_k)) \leq 0$ for all $e_k \in \cE$.
	\end{enumerate}
\end{definition}

\noindent Consistency is closely related to a mass action system's capacity to admit positive steady states \citep{angeli2009tutorial}. Weak reversibility was introduced as a generalization of reversibility in Horn \& Jackson \citep{H-J1}. Endotactic networks were introduced as a generalization to weak reversibility in Craciun et al. \citep{C-N-P}, where it is shown that any weakly reversible network is endotactic.

\begin{remark}
Notice that consistency is a necessary but not sufficient condition on the network structure for the corresponding mass action system to admit positive steady states. For example, consider the network $\emptyset \longleftarrow X \longrightarrow 2X$.  This network is consistent, which  can be observed by selecting rate constants $a_1 = a_2 = 1$.  However, if the rate constants are selected as $\emptyset \; \stackrel{1}{\longleftarrow} \; X \; \stackrel{2}{\longrightarrow} \; 2X$, then the generated dynamics are $\dot{x} = x$, which has  solution $x(t) = x(0)e^t$,  and there is no positive steady state.  \hfill $\diamond$
\end{remark}

\begin{remark}
 Weak reversibility may also be understood using \cref{egraphCRN} as the property that every edge in $\cG$ is in a directed cycle. This is distinct, but similar, to the stronger requirement of \emph{reversibility}, i.e. that for every edge $e$ there is some edge $e^*$ such that $\bm{s}(e) = \bm{t}(e^*)$ and $\bm{t}(e) = \bm{s}(e^*)$. For instance, the network
\[
\xymatrix{
	X_2 \ar[r]\ar@<0.5ex>[dr]  & X_1 + X_2 \ar[d]\\
	 &\ar@<0.5ex>[ul] X_1}
 \]
is not reversible, since there is no reaction $X_1 + X_2 \to X_2$, but is weakly reversible since there is a path from $X_1+X_2$ to $X_2$ through $X_1$. \hfill $\diamond$
\end{remark}

	\begin{remark}\label{sweep}
		Intuitively, a Euclidean embedded graph is endotactic if no reaction ``points outward." This can be tested using the so-called ``parallel sweep test" (see Craciun et al. \citep{C-N-P}). In \cref{dyneqEgraphs} (a), we can tell that the network is endotactic because any direction $\b{w}$ which is not perpendicular to the two reactions has the property that if $\b{w}\cdot \b{v}(e_1) < 0$ then $\b{w} \cdot (\b{s}(e_2)-\b{s}(e_1)) = \b{w} \cdot \b{v}(e_1) < 0$ and $\b{w} \cdot \b{v}(e_2) = \b{w} \cdot (-\b{v}(e_1)) >0$. In \cref{dyneqEgraphs} (b), we can see from $\b{w} = (-1,-1)$ that this is not endotactic. We have that $\b{w} \cdot \b{v}(e_2) <0$, but $\b{w} \cdot (\b{s}(e_1) - \b{s}(e_2)) = \b{w} \cdot \b{v}(e_1) = 0$, and while $\b{w} \cdot \b{v}(e_3)>0$, $\b{w}\cdot(\b{s}(e_2) -\b{s}(e_3)) = 0$. 
\hfill $\diamond$
	\end{remark}
	
	\begin{figure}
	\centering
	\includegraphics[scale = 1]{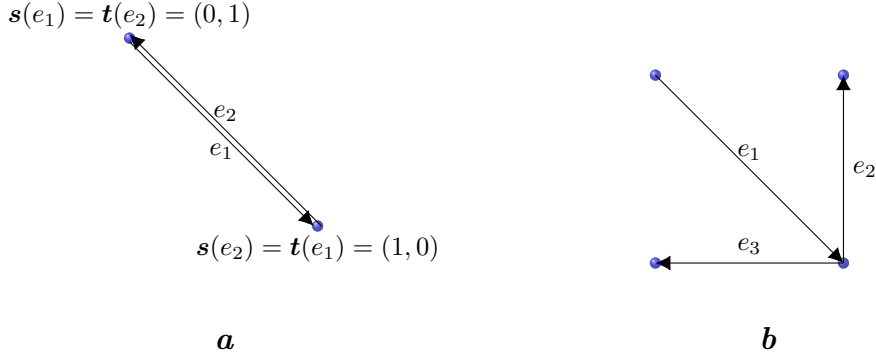}
\caption{Two systems which have the same dynamics when every rate constant is taken to be $1$. Notice that $\bm{v}(e_2)$ in (a) is in the positive cone formed by $\bm{v}(e_2)$ and $\bm{v}(e_3)$ in (b), and that all of these edges have the same source vector, $(1,0)$.
}\label{dyneqEgraphs}
\end{figure}

The properties above are intrinsically properties of E-graphs. However, we can define the same notions for dynamical systems using the relationship between E-graphs and dynamical systems established in \cref{def:generates}.

\begin{definition}\label{dyneq2}
	We will call a  dynamical system $\frac{d\mathbf{x}}{dt} = \b{f}(\mathbf{x})$:
	\begin{enumerate}
		\item
		\textbf{consistent} if $\frac{d\mathbf{x}}{dt} =\b{f}(\mathbf{x})$ can only be generated by a consistent Euclidean embedded graph;
		\item
		\textbf{weakly reversible} if $\frac{d\mathbf{x}}{dt} = \b{f}(\mathbf{x})$ can be generated by some weakly reversible Euclidean embedded graph $\cG$;
		\item
		\textbf{endotactic} if $ \frac{d\mathbf{x}}{dt} =\b{f}(\mathbf{x})$ can be generated by some endotactic Euclidean embedded graph $\cG$;
		\item 
		\textbf{strongly endotactic} if $\frac{d\mathbf{x}}{dt} =\b{f}(\mathbf{x})$ can be generated by some strongly endotactic Euclidean embedded graph $\cG$.
	\end{enumerate}
\end{definition}

\begin{remark}\label{consistent}
	In \cref{dyneq2} part 1., we have that if a dynamical system is consistent, then every graph which generates the system must be consistent. This is in contrast to parts 2., 3. and 4. of this definition. This is because any polynomial dynamical system can be generated by some consistent network. To see this, we simply add a set of edges $e_1^*,...,e_p^*$ which share a source $\b{s}^*$ to some graph $\cG = (\cV,\cE)$ which generates the system, requiring that the cone generated by $\b{v}(e_1^*),...,\b{v}(e_p^*)$ is equal to the span of the original reaction vectors. This implies that $0\in \mathit{Cone}(\{\b{v}(e_1^*) ,...,\b{v}(e_p^*)\})$ and so the new graph also generates the polynomial. Also, for any choice of $a_1,...,a_{|\cE|} > 0$, we have chosen the $\b{v}(e^*_i)$ such that $-\sum_{i=1}^{|\cE|}a_i \b{v}(e_i)\in \mathit{Cone}(\{\b{v}(e_1^*) ,...,\b{v}(e_p^*)\})$, and furthermore is in the relative interior of that cone. Therefore, there is a choice of $b_1,...,b_p > 0$ such that $\sum_{i=1}^p b_i\b{v}(e_i^*) = -\sum_{i=1}^{|\cE|}a_i \b{v}(e_i)$ and so the new graph is consistent.
	\hfill $\diamond$
\end{remark}

\subsection{Dynamical Equivalence}
\label{desection}

As already noted, it is well known that the dynamical representations of mass action systems (i.e., \cref{gener}) are not uniquely determined by the network structure. For another example, consider the following networks, whose E-graphs are shown in \cref{dyneqEgraphs}:
\begin{equation}
\label{system1}
\xymatrix{
	X_1 \ar@<0.5ex>[r]^{1} & \ar@<0.5ex>[l]^{1} X_2
}
\end{equation}
and
\begin{equation}
\label{system2}
\xymatrix@!0@R=6mm@C=10mm{
	&  &**[r] X_1 + X_2\\
	X_2 \ar[r]^1 & X_1  \ar[ur]^1 \ar[dr]_1& \\
	 &  &**[r] \emptyset
}
\end{equation}
It can be easily seen that \cref{system1} and \cref{system2} are both governed by the mass action dynamics $\dot{x}_1 = -\dot{x}_2 = -x_1+x_2$.  We therefore introduce the following definition \citep{C-P}.

\begin{definition}\label{dyneqdef}
Consider two chemical reaction networks $\cG = (\cV,\cE)$ and $\tilde{\cG} = (\tilde{\cV},\tilde{\cE})$, combined with rate constants $\mathcal{K} = \{ k_i \; | \; i=1,\ldots, r\}$ and $\tilde{\mathcal{K}} = \{ \tilde{k}_i \; | \; i=1,\ldots, \tilde{r}\}$, respectively. We will say that the mass action systems $\cG(\mathcal{K})$ and $\tilde{\cG}(\tilde{\mathcal{K}})$ are \textbf{dynamically equivalent} if the generated functions $\bm{f}_{\cG(\cK)}$ and $\bm{f}_{\tilde{\cG}(\tilde{\cK})}$ coincide (i.e.  $\bm{f}_{\cG(\cK)}(\mathbf{x}) = \bm{f}_{\tilde{\cG}(\tilde{\cK})}(\mathbf{x})$, for all $\mathbf{x}$).
\end{definition}

We can see that the dynamical systems   $\mathcal{G}(\mathcal{K})$ and $\tilde{\mathcal{G}}(\tilde{\mathcal{K}})$ associated with \cref{system1} and \cref{system2} are dynamically equivalent. We may furthermore observe that $\cG$ and $\tilde{\cG}$ fail to share the same structural properties:  $\cG$ is weakly reversible while $\tilde{\cG}$ is not. As in \cref{sweep}, we can also easily verify that $\cG$ is endotactic, while $\tilde{\cG}$ is not  (see \cref{dyneqEgraphs}). Notice, however, that in the classifications given in \cref{dyneq2}, the polynomial dynamical system \textit{is} said to be weakly reversible (and endotactic) because the network $\cG = (\cV,\cE)$ is.  This example shows that it is possible for a mass action system to behave as though the generating network has a particular desirable network property, even when the generating network does not itself have it. We might therefore say that the known generating E-graph behaves as though it has that property for some (or perhaps even all) choices of rate constants.

In order to formalize this notion, we inspect the case of E-graphs which may generate the same polynomial dynamical system.

\begin{definition}\label{overlap}
	Let $\cG_1$ and $\cG_2$ be Euclidean embedded graphs. We say that $\cG_1$ and $\cG_2$ {\bf have the capacity for dynamical equivalence}, and write $\cG_1 \sqcap \cG_2$, if there exists a system $\frac{d\mathbf{x}}{dt} = \bm{f}(\mathbf{x})$ that can be generated by both $\cG_1$ and $\cG_2$ (i.e. there exists $\cK_1$ and $\cK_2$ such that $\bm{f}_{\cG_1(\cK_1)}(\mathbf{x}) = \bm{f}_{\cG_2(\cK_2)}(\mathbf{x}) = \bm{f}(\mathbf{x})$, for all $\mathbf{x}$).
\end{definition}

\begin{definition}
	Let $\cG$ be a Euclidean embedded graph. We say that $\cG$ {\bf has the capacity for weak reversibility} if there exists a weakly reversible Euclidean embedded graph $\tilde{\cG}$ such that $\cG \sqcap \tilde{\cG}$. Likewise, we say that $\cG$ {\bf has the capacity to be endotactic (strongly endotactic)} if there exists some endotactic (strongly endotactic) Euclidean embedded graph $\tilde{\cG}$ such that $\cG \sqcap \tilde{\cG}$.
\end{definition}

The following theorem asserts that the above definitions are meaningful. In particular, it shows that there are networks which can generate weakly reversible (respectively, endotactic) networks which are not themselves weakly reversible (respectively, endotactic).

\begin{theorem}\label{inclusions}
	The following inclusions hold and are strict.
	\begin{enumerate}
		\item
		The set of networks with the capacity for weak reversibility contains the set of weakly reversible networks.
		\item
		The set of networks with the capacity to be endotactic contains the set of endotactic networks.
	\end{enumerate}
	\end{theorem}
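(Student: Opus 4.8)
The plan is to split each of the two parts into an inclusion and a strictness claim, and to dispose of both inclusions by a single observation about reflexivity. First I would note that $\sqcap$ is reflexive: for any E-graph $\cG$ and any choice of rate constants $\cK$, the system $\cG(\cK)$ is generated by $\cG$, so trivially $\cG \sqcap \cG$. Hence if $\cG$ is weakly reversible, then taking $\tilde{\cG} = \cG$ in the definition exhibits a weakly reversible E-graph with $\cG \sqcap \tilde{\cG}$, so $\cG$ has the capacity for weak reversibility; the identical argument with ``endotactic'' in place of ``weakly reversible'' gives the inclusion in part 2. Both containments then follow immediately, leaving only the strictness claims.

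For strictness of part 1 I would reuse the network in \cref{eq:9807070}. It is not weakly reversible, since the edge $\emptyset \to X$ cannot lie on any directed cycle ($X$ has no outgoing edges), so the cycle criterion of \cref{classificationsEG} fails. Yet for the indicated rate constants it generates the same polynomial dynamical system as the network in \cref{eq:98}, which is reversible and therefore weakly reversible; hence the two networks have the capacity for dynamical equivalence in the sense of \cref{overlap}. It follows that \cref{eq:9807070} has the capacity for weak reversibility without being weakly reversible, making the inclusion strict.

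For strictness of part 2 I would argue analogously with \cref{system1} and \cref{system2}. Setting every rate constant to $1$, both networks generate $\dot x_1 = -\dot x_2 = -x_1 + x_2$ (see \cref{dyneqEgraphs}), so they have the capacity for dynamical equivalence; since \cref{system1} is (weakly) reversible and hence endotactic, \cref{system2} has the capacity to be endotactic. It remains only to verify that \cref{system2} is not itself endotactic, which is precisely the computation recorded in \cref{sweep}: the covector $\bm w = (-1,-1)$ gives $\bm w \cdot \bm v < 0$ for the reaction $X_1 \to X_1 + X_2$, but the unique reaction with $\bm w \cdot \bm v > 0$ shares its source, so the defining inequality $\bm w \cdot (\bm s(e_j) - \bm s(e_i)) < 0$ cannot be met. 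Hence \cref{system2} witnesses the strictness of part 2.

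Because the inclusions are immediate, all the content resides in these two counterexamples, and the only delicate point is to apply the definition of ``capacity'' in the correct direction: the witness network $\tilde{\cG}$ must carry the structural property, while the given network $\cG$ need not. I therefore expect no substantive obstacle beyond this bookkeeping — failure of weak reversibility is read directly off the cycle criterion, and failure of endotacticity off a single test covector — with the main point of care being simply that each counterexample genuinely fails the relevant property while remaining dynamically equivalent to a network that possesses it.
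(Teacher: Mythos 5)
Your proof is correct and takes essentially the same approach as the paper's: the inclusions are immediate from the definitions (via reflexivity of $\sqcap$), and strictness is witnessed by exhibiting a network that lacks the structural property yet is dynamically equivalent to one that has it. The only difference is the choice of witnesses — the paper introduces a single new example (\cref{gbiggerone}, which is neither weakly reversible nor endotactic and is equivalent to \cref{gsmallerone} precisely when $k_1=k_2$ and $k_3=k_4$) to settle both parts at once, whereas you reuse \cref{eq:9807070} and \cref{system2} from the paper's earlier discussion, which works equally well.
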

	\begin{proof} That these inclusions hold follows directly from the definitions.  We now simply need to demonstrate that the reverse inclusion does not hold.
		
		Consider the network $\cG$, below
		\begin{equation}\label{gbiggerone}
		\xymatrix{
			X_2 \ar[r]^{k_3} \ar[d]_{k_4} & X_1 + X_2 \\
			\emptyset & \ar[l]_{k_2} \ar[u]_{k_1} X_1
		}.
		\end{equation}
		The network $\mathcal{G}$ is  neither weakly reversible nor endotactic. However,  if (and only if) $k_1=k_2$ and $k_3=k_4$, then the generated mass action system may also be generated by the weakly reversible and endotactic network $\tilde{\cG}$ with correctly chosen rate constants:
		\begin{equation}\label{gsmallerone}
		\xymatrix{
			X_1 \ar@<0.5ex>[r]^{{k}_1} & \ar@<0.5ex>[l]^{{k}_3} X_2
		}.
		\end{equation}
		Therefore, $\cG \sqcap \tilde{\cG}$, completing the proof. 
	\end{proof}

Notice that we may also define the notion of \emph{having the capacity to be consistent} in the same way. However, by \cref{consistent} we see that every E-graph has the capacity to be consistent.

We are interested in the stronger case in which every system generated by an E-graph can also be generated by another E-graph with a desired property. We therefore define the following.

\begin{definition}
	Let $\cG$ be a Euclidean embedded graph. We say that $\cG$ is {\bf effectively weakly reversible} if every  dynamical system generated by $\cG$ is weakly reversible. Likewise, we say that $\cG$ is {\bf effectively endotactic (strongly endotactic)} if every polynomial dynamical system generated by $\cG$ is endotactic (strongly endotactic).
\end{definition}

It is possible for a network to be effectively weakly reversible, but not weakly reversible as the example \cref{eq:9807070}, given in the introduction, demonstrates.

In order to show that a network is effectively weakly reversible or effectively endotactic, it is of course helpful to characterize when one network can generate any system that can be generated by some other system.

\begin{definition}\label{includedyn}
	Let $\cG_1$ and $\cG_2$ be Euclidean embedded graphs. We say that $\cG_1$ includes the dynamics of $\cG_2$, and write $\cG_2 \sqsubseteq \cG_1$, if any system $\frac{d\mathbf{x}}{dt} = \bm{f}(\mathbf{x})$ generated by $\cG_2$ can also be generated by $\cG_1$ (i.e., for any $\cK_2$ there is some $\cK_1$ such that $\b{f}_{\cG_1(\cK_1)}(\mathbf{x}) = \b{f}_{\cG_2(\cK_2)}(\mathbf{x})$ for all $\mathbf{x}$). 
\end{definition}

We have already encountered  several examples of \cref{includedyn} in this manuscript. For example, the network shown in \cref{system2} and \cref{dyneqEgraphs} (b) contains the dynamics of the network shown in \cref{system1} and \cref{dyneqEgraphs} (a). In \cref{dyneqEgraphs}, it is noted that these networks generate the same dynamical system when every rate constant is taken to be $1$. Now, note that for any choice of rate constants $\ka_{1}$ for edge $e_1$ and $\ka_{2}$ for edge $e_2$ chosen to generate a dynamical system using the network shown in \cref{dyneqEgraphs} (a), we can generate the same network using \cref{dyneqEgraphs} (b) by choosing $\tilde{\ka}_1 = \ka_1$ for edge $e_1$, and $\tilde{\ka}_2 = \tilde{\ka}_3 = \ka_2$  for edges $e_2$ and $e_3$.  

An obvious sufficient condition for a network $\cG$ to be effectively weakly reversible is then that there exists some weakly reversible $\tilde{\cG}$ such that $\cG \sqsubseteq \tilde{\cG}$.

\section{Main Results}

In this section, we prove the main correspondences of this paper. In the first subsection, we address \cref{q1}. We then apply this result in the subsequent subsections in order to investigate \cref{q2}.

We will frequently require the following known results \citep{mangasarian1994nonlinear}.

\begin{lemma}[Farkas' Lemma]
\label{lemma01}
Let $\{ \bm{v}_i \}$, $i=1, \ldots, m$, denote a family of vectors in $\mathbb{R}^n$. Then, for any $\bm{b} \in \mathbb{R}^n$ exactly one of the following is true:
\begin{enumerate}
\item
There exist constants $\lambda_i \geq 0$, $i=1, \ldots, m$, such that $\displaystyle{\bm{b} = \sum_{i=1}^m \lambda_i \bm{v}_i}$; or
\item
There is a vector $\bm{w} \in \mathbb{R}^n$ such that $\bm{w} \cdot \bm{v}_i \geq 0$ for $i=1,\ldots, m$, and $\bm{w} \cdot \bm{b} < 0.$
\end{enumerate}
\end{lemma}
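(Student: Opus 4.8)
The plan is to show that the two alternatives are mutually exclusive and jointly exhaustive. Mutual exclusivity is immediate and I would dispose of it first: if both held, then writing $\bm{b} = \sum_i \lambda_i \bm{v}_i$ with $\lambda_i \geq 0$ and pairing with the separating vector $\bm{w}$ from the second alternative gives $\bm{w} \cdot \bm{b} = \sum_i \lambda_i (\bm{w} \cdot \bm{v}_i) \geq 0$, contradicting $\bm{w} \cdot \bm{b} < 0$. Thus at most one alternative can occur, and the real content is that at least one always does.

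For the substantive direction I would work with the finitely generated cone $C = \{ \sum_{i=1}^m \lambda_i \bm{v}_i : \lambda_i \geq 0 \}$. The first alternative holds exactly when $\bm{b} \in C$, so it suffices to show that $\bm{b} \notin C$ forces the second. The tool is the projection form of the separating-hyperplane theorem: if $C$ is closed and convex and $\bm{b} \notin C$, then the Euclidean projection $\bm{p}$ of $\bm{b}$ onto $C$ exists and is unique, and I claim $\bm{w} := \bm{p} - \bm{b}$ works. Indeed, the variational characterization of the projection, $(\bm{c} - \bm{p}) \cdot (\bm{b} - \bm{p}) \leq 0$ for all $\bm{c} \in C$, rewrites as $(\bm{c} - \bm{p}) \cdot \bm{w} \geq 0$. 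Testing this on $\bm{c} = \bm{0}$ and $\bm{c} = 2\bm{p}$ (both in the cone $C$) yields $\bm{p} \cdot \bm{w} = 0$, whence $\bm{w} \cdot \bm{b} = \bm{w} \cdot (\bm{p} - \bm{w}) = -\|\bm{w}\|^2 < 0$ since $\bm{w} \neq \bm{0}$; testing it on $\bm{c} = \bm{v}_i + \bm{p} \in C$ gives $\bm{v}_i \cdot \bm{w} \geq 0$ for every generator. This is precisely alternative (2).

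The main obstacle is the one fact I have silently used, that the finitely generated cone $C$ is \emph{closed}; without it the projection $\bm{p}$ need not lie in $C$ and the argument collapses. I would establish closedness by a Carath\'eodory-type reduction: every element of $C$ is a nonnegative combination of a linearly independent subset of $\{\bm{v}_i\}$, so $C$ is the union of the finitely many simplicial cones spanned by such subsets, each of which is the image of a closed orthant under an injective linear map and hence closed; a finite union of closed sets is closed. An alternative that sidesteps topology entirely is to prove the lemma combinatorially by induction on $m$ via Fourier--Motzkin elimination, removing one generator at a time. That route is elementary but bookkeeping-heavy, so I would favor the projection argument with the Carath\'eodory closedness lemma isolated as the single nontrivial ingredient.
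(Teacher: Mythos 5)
Your proof is correct, but note that the paper itself offers no proof to compare against: Farkas' Lemma appears there as a quoted known result, cited to Mangasarian's \emph{Nonlinear Programming}, and is used as a black box in the proofs of \cref{sourceonly} and \cref{wkrev}. Your argument is the standard self-contained one, and it is sound in all its steps: mutual exclusivity by pairing the conic combination with $\bm{w}$; existence of the separating vector via the projection $\bm{p}$ of $\bm{b}$ onto the cone $C$, with the variational inequality $(\bm{c}-\bm{p})\cdot(\bm{b}-\bm{p})\le 0$ tested at $\bm{c}=\bm{0}$, $\bm{c}=2\bm{p}$, and $\bm{c}=\bm{v}_i+\bm{p}$ (all legitimately in $C$ since $C$ is a convex cone); and, crucially, you correctly identified that the whole argument hinges on $C$ being closed, which is exactly the point most often glossed over. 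Your Carath\'eodory-type reduction --- writing $C$ as a finite union of simplicial cones, each the image of a closed orthant under an injective linear map and hence closed --- is the right way to discharge that obligation, since an injective linear map is a homeomorphism onto its image, which is a (closed) linear subspace. The only thing you leave implicit is the proof of the conical Carath\'eodory theorem itself (zeroing out a coefficient using a linear dependence among the active generators), but flagging it as the isolated nontrivial ingredient is reasonable. In short: the paper imports the lemma, you prove it; your route buys self-containedness at the cost of the projection theorem and Carath\'eodory as prerequisites, which is the standard trade-off.
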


\begin{lemma}[Stiemke's Theorem]
\label{stiemke}
Let $\{ \bm{v}_i \}$, $i=1, \ldots, m$, denote a family of vectors in $\mathbb{R}^n$. Then exactly one of the following is true:
\begin{enumerate}
\item
There exist constants $\lambda_i \in \mathbb{R}_{>0}$, $i=1, \ldots, m$, so that $\displaystyle{\sum_{i=1}^m \lambda_i \bm{v}_i = \bm{0}}$; or
\item
There exists a vector $\bm{w} \in \mathbb{R}^n$ so that $\displaystyle{\bm{w} \cdot \bm{v}_i \leq \bm{0}}$, $i=1,\ldots, m$, with the inequality strict for at least one $i_0 \in \{ 1, \ldots, m\}$.
\end{enumerate}
\end{lemma}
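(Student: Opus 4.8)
The plan is to derive Stiemke's Theorem from Farkas' Lemma (\cref{lemma01}), which is already in hand. Since this is a theorem of the alternative, I must establish two things: that conditions 1 and 2 cannot hold at once, and that at least one of them always holds. Mutual exclusivity is immediate: if both held, take the $\lambda_i > 0$ from condition 1 and the $\bm{w}$ from condition 2 and compute $0 = \bm{w} \cdot \sum_i \lambda_i \bm{v}_i = \sum_i \lambda_i (\bm{w} \cdot \bm{v}_i)$. Each summand is $\le 0$ because $\lambda_i > 0$ and $\bm{w} \cdot \bm{v}_i \le 0$, while the $i_0$ summand is strictly negative, forcing the sum to be $< 0$, a contradiction.

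For the substantive direction I would argue the contrapositive: assuming condition 2 fails, I construct the strictly positive coefficients of condition 1. The idea for getting \emph{strict} positivity is to build the witness one coordinate at a time and then add the pieces. Fix an index $k$ and seek nonnegative $\lambda_i$ with $\sum_i \lambda_i \bm{v}_i = \bm{0}$ and $\lambda_k > 0$. By rescaling we may normalize $\lambda_k = 1$, which recasts the requirement as the inhomogeneous system $\sum_{i \neq k} \lambda_i \bm{v}_i = -\bm{v}_k$ with $\lambda_i \ge 0$ for $i \neq k$. This is exactly the hypothesis of Farkas' Lemma applied to the family $\{\bm{v}_i : i \neq k\}$ with right-hand side $\bm{b} = -\bm{v}_k$: either such $\lambda_i$ exist, or there is a $\bm{w}$ with $\bm{w} \cdot \bm{v}_i \ge 0$ for all $i \neq k$ and $\bm{w} \cdot (-\bm{v}_k) < 0$. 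In the latter case $-\bm{w}$ satisfies $(-\bm{w}) \cdot \bm{v}_i \le 0$ for all $i$, with strict inequality at $i = k$, which is precisely condition 2 and contradicts our assumption. Hence, for each $k$, Farkas yields a nonnegative solution $\lambda^{(k)}$ with $\lambda_k^{(k)} > 0$ and $\sum_i \lambda_i^{(k)} \bm{v}_i = \bm{0}$.

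To finish I would assemble these partial solutions by setting $\lambda = \sum_{k=1}^m \lambda^{(k)}$; then $\sum_i \lambda_i \bm{v}_i = \bm{0}$ by linearity, and $\lambda_i \ge \lambda_i^{(i)} > 0$ for every $i$, which is exactly condition 1. Combined with mutual exclusivity, this establishes that exactly one alternative holds. The main obstacle is the strict-positivity demand in condition 1: Farkas' Lemma only ever produces \emph{nonnegative} multipliers, so a single application cannot guarantee that every coordinate is strictly positive. The coordinate-by-coordinate construction, together with the normalization trick $\lambda_k = 1$ that converts the open condition $\lambda_k > 0$ into a closed inhomogeneous Farkas system, is exactly what overcomes this difficulty.
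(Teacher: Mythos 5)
Your proof is correct, but it does not parallel anything in the paper: the paper gives no proof of this statement at all, importing both Farkas' Lemma (\cref{lemma01}) and Stiemke's Theorem (\cref{stiemke}) as known results from the optimization literature \citep{mangasarian1994nonlinear}. So the comparison here is between a bare citation and a self-contained derivation. Your derivation is sound: the exclusivity argument is the standard pairing computation; the substantive direction correctly applies \cref{lemma01} once per index $k$, to the family $\{\bm{v}_i : i \neq k\}$ with $\bm{b} = -\bm{v}_k$, and in the alternative branch negating the Farkas vector $\bm{w}$ produces a witness for condition 2 with strictness at $i_0 = k$, contradicting the assumed failure of condition 2; hence the first branch holds and yields $\lambda^{(k)} \geq 0$ with $\lambda^{(k)}_k = 1$ and $\sum_i \lambda^{(k)}_i \bm{v}_i = \bm{0}$. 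Summing the $\lambda^{(k)}$ over $k$ is exactly the right device for upgrading Farkas' merely nonnegative multipliers to the strictly positive ones Stiemke demands, since the $k$th summand contributes $1$ to the $k$th coordinate and nothing negative elsewhere; this is the standard textbook reduction of Stiemke to Farkas. What your route buys is self-containedness: \cref{stiemke} becomes a consequence of \cref{lemma01} rather than an independent import, so only one external result need be trusted. What the paper's citation buys is brevity, which is a reasonable trade for a classical theorem of the alternative. One small point to add in a polished write-up: when $m = 1$ your fixed-$k$ step applies Farkas to an empty family, which is fine under the convention that the empty nonnegative combination is $\bm{0}$, but it deserves a sentence (equivalently, note that for $m=1$ condition 2 fails precisely when $\bm{v}_1 = \bm{0}$).
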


\subsection{A condition for dynamical equivalence}

Here, we provide necessary and sufficient conditions under which one network contains the dynamics of another. Furthermore, as a corollary we provide necessary and sufficient conditions under which two networks have the capacity for dynamical equivalence. These conditions are geometric in nature and easily checkable.

The appearance of the source complexes $\bm{s}(e)$ as exponents in \cref{gener} suggests that we need to consider this subset of the complexes. We will also need the notion of a \emph{cone}. Given a finite set of vectors $S \subseteq \bR^d$ we define the set $K = \mathit{Cone}(S)$, the cone \emph{generated} by $S$, as the closed, convex set of all finite, nonnegative linear combinations of the elements of $S$  \citep{nonneg}. We denote the interior of a cone $K = \mathit{Cone}(S)$ relative to the span of $S$ (i.e. the relative interior of $K$) by $\mathit{RelInt}({K})$.

If $\cG = (\cV,\cE)$ is a Euclidean embedded graph, let $\cS\cC_{\cG} = \{\bm{s}(e)|e\in \cE\}$ be the source complexes/vectors of $\cG$, and for $\bm{s}\in \cS\cC_{\cG}$ let  $V^{\cG}(\bm{s}) = \mathit{Cone}(\{\bm{v}(e_i)|\bm{s}(e_i) = \bm{s}, e_i \in \cE\})$ be the cone generated by those reaction vectors with source vector equal to $\bm{s}$. If a vector $\bm{s} \not \in \cS\cC_{\cG}$, we  define $V^{\cG}(\bm{s}) = \{\bm{0}\}$. 

\begin{theorem}\label{containcondit}
	$\cG_2 \sqsubseteq \cG_1$ if and only if (i)  $\cS\cC_{\cG_2} \subset \cS\cC_{\cG_1}$, and (ii) for every $\bm{s} \in \cS\cC_{\cG_1}$
	\[
	\mathit{RelInt}(V^{\cG_2}(\bm{s})) \subseteq \mathit{RelInt}(V^{\cG_1}(\bm{s})),
	\]
	where we take $V^{\cG}(\bm{s}) = \{\bm{0}\}$ if $\bm{s} \not \in \cS\cC_{\cG}$.
\end{theorem}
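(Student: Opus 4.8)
The plan is to reduce the functional identity $\bm f_{\cG_1(\cK_1)} = \bm f_{\cG_2(\cK_2)}$ to a family of cone-membership conditions, one per candidate source complex. First I would regroup the sum in \cref{gener} by source complex: writing $\bm g_{\cG}(\bm s) := \sum_{i:\, \bm s(e_i) = \bm s} k_i \bm v(e_i)$ for the net reaction vector emanating from $\bm s$, one has
\[ \bm f_{\cG(\cK)}(\mathbf{x}) = \sum_{\bm s \in \cS\cC_{\cG}} \mathbf{x}^{\bm s}\, \bm g_{\cG}(\bm s). \]
Because the monomials $\mathbf{x}^{\bm s}$ attached to distinct exponents $\bm s$ are linearly independent as functions on $\bR^d_{>0}$, two generated vector fields agree if and only if their coefficient vectors agree monomial-by-monomial. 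Thus the entire question reduces to understanding, for each exponent $\bm s$, which coefficient vectors a graph can realize there.

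Second, I would pin down that realizable set. As the rate constants $k_i$ range over $\bR_{>0}$, the net vector $\bm g_{\cG}(\bm s)$ ranges over all strictly positive combinations of the generators $\{\bm v(e_i) : \bm s(e_i) = \bm s\}$; by the standard convex-geometry fact that the relative interior of a finitely generated cone is exactly the set of strictly positive combinations of its generators, this set is precisely $\mathit{RelInt}(V^{\cG}(\bm s))$. For $\bm s \notin \cS\cC_{\cG}$ the monomial $\mathbf{x}^{\bm s}$ does not appear, so its coefficient is forced to be $\bm 0$, matching the convention $V^{\cG}(\bm s) = \{\bm 0\}$. Combining with the previous step yields the clean characterization: a field $\bm f = \sum_{\bm s} \mathbf{x}^{\bm s}\, \bm c_{\bm s}$ is generated by $\cG$ if and only if $\bm c_{\bm s} \in \mathit{RelInt}(V^{\cG}(\bm s))$ for every $\bm s \in \bR^d$.

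Third, I would assemble the equivalence. By the characterization, $\cG_2 \sqsubseteq \cG_1$ holds exactly when every family $(\bm c_{\bm s})$ with $\bm c_{\bm s} \in \mathit{RelInt}(V^{\cG_2}(\bm s))$ also satisfies $\bm c_{\bm s} \in \mathit{RelInt}(V^{\cG_1}(\bm s))$; since the $\bm c_{\bm s}$ may be chosen independently across distinct $\bm s$, this is equivalent to the pointwise inclusions $\mathit{RelInt}(V^{\cG_2}(\bm s)) \subseteq \mathit{RelInt}(V^{\cG_1}(\bm s))$ for all $\bm s$. Sorting these by source recovers the theorem: for $\bm s \in \cS\cC_{\cG_1}$ they are exactly condition (ii); for $\bm s$ in neither source set they are vacuous; and for $\bm s \in \cS\cC_{\cG_2} \setminus \cS\cC_{\cG_1}$ the right-hand side is $\{\bm 0\}$ while the left-hand side contains a nonzero vector (each source in $\cS\cC_{\cG_2}$ emits an edge with $\bm v(e) \ne \bm 0$ by \cref{egraphCRN}), so that inclusion fails unless this set is empty, i.e. unless condition (i) holds. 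Conversely (i) renders that case vacuous, so (i) and (ii) together are equivalent to the universal inclusion.

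The main subtlety, rather than any hard computation, lies in the degenerate cases. In particular condition (ii) applied to a source $\bm s \in \cS\cC_{\cG_1} \setminus \cS\cC_{\cG_2}$ reads $\bm 0 \in \mathit{RelInt}(V^{\cG_1}(\bm s))$, encoding the nonobvious requirement that $\cG_1$ be able to cancel the contribution of its extra source while keeping every rate constant strictly positive; I would make sure the characterization in the second step absorbs this uniformly. The other point needing care is the convex-geometry fact itself, especially when $V^{\cG}(\bm s)$ is not pointed (i.e. contains a line through the origin), where one must verify that the strict positive combinations still fill precisely the relative interior relative to $\mathit{Span}\{\bm v(e_i) : \bm s(e_i) = \bm s\}$.
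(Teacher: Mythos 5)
Your proposal is correct and takes essentially the same route as the paper's proof: both decompose the generated vector field by source complex, use linear independence of distinct monomials to reduce to a monomial-by-monomial comparison, identify the achievable coefficient at each source $\bm{s}$ with the set of strictly positive combinations of the reaction vectors (i.e.\ $\mathit{RelInt}(V^{\cG}(\bm{s}))$), and treat the degenerate cases --- the forced $\bm{0}$ coefficient at non-sources and the requirement $\bm{0} \in \mathit{RelInt}(V^{\cG_1}(\bm{s}))$ at sources of $\cG_1$ missing from $\cG_2$ --- in exactly the same way. The only difference is organizational: you first isolate an exact characterization of the full set of generated fields and then conclude by comparing products of nonempty sets factor-by-factor, whereas the paper argues the two implications directly by constructing and obstructing choices of rate constants; the underlying ideas coincide.
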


\begin{proof}
	We begin by proving that (i) and (ii) imply that $\cG_2 \sqsubseteq \cG_1$.  Let $\cK^2 \in \bR^{|\cE_{\cG_2}|}_{>0}$. We must show that there exists $\cK^1 \in \bR^{|\cE_{\cG_1}|}_{>0}$ such that 
	\[
	\bm{f}_{\cG_2(\cK^2)}  = \bm{f}_{\cG_1(\cK^1)}.
	\]
	We use superscript $1$ or $2$ to differentiate edges, source vectors, and reaction vectors of $\cG_1$ and $\cG_2$, respectively. Note that for any choice of $\cK^1$ and $\cK^2$ we have
	\[
	\bm{f}_{\cG_2(\cK^2)}(\mathbf{x})  - \bm{f}_{\cG_1(\cK^1)}(\mathbf{x}) = \sum_{e^2 \in \cE_{\cG_2}} k_{e^2} \mathbf{x}^{\bm{s}(e^2)}\bm{v}(e^2) - \sum_{e^1 \in \cE_{\cG_1}} k_{e^1} \mathbf{x}^{\bm{s}(e^1)}\bm{v}(e^1).
	\]
	We wish to rewrite these sums in terms of the source complexes, which we enumerate via 
	\[
		\cS\cC_{\cG_1} \cup \cS\cC_{\cG_2} =  \cS\cC_{\cG_1} =  \{s_1,s_2,\dots,s_{|\cS\cC_{\cG_1}|}\}.
		\]
		Then, for each $\b{s}_i \in \cS\cC_{\cG_1}$ we let 
	 $m_i = |\{e^1 \in \cE_{\cG_1}| \bm{s}(e^1) = \bm{s}_i\}|$, and $n_i = |\{e^2 \in \cE_{\cG_2}| \bm{s}(e^2) = \bm{s}_i\}|$ be the number of edges out of complex $\b{s}_i$ for networks $\cG_1$ and $\cG_2$, respectively. Furthermore, let $\{k_{ij}^l\} = \{k_{e^l} | \bm{s}(e^l) = \bm{s}_i\}$, and $\{\bm{v}_{ij}^l\} = \{\bm{v}(e^l) | \bm{s}(e^l) = \bm{s}_i\}$, $l = 1,2$, where the sizes of the sets are $m_i$ and $n_i$ for $\ell = 1$ and $\ell = 2$, respectively. Then 
	\[
	\bm{f}_{\cG_2(\cK^2)}(\mathbf{x})  - \bm{f}_{\cG_1(\cK^1)}(\mathbf{x}) = \sum_{\bm{s}_i \in \cS\cC_{\cG_2}}\mathbf{x}^{\bm{s}_i}\sum_{j=1}^{n_i}k_{ij}^2 \bm{v}_{ij}^2 - \sum_{\bm{s}_i \in \cS\cC_{\cG_1}}\mathbf{x}^{\bm{s}_i}\sum_{j=1}^{m_i}k_{ij}^1\bm{v}_{ij}^1.
	\]
	Then, because $\cS\cC_{\cG_2} \subseteq \cS\cC_{\cG_1}$, 
	\[
	\bm{f}_{\cG_2(\cK^2)}(\mathbf{x})  - \bm{f}_{\cG_1(\cK^1)}(\mathbf{x}) = \sum_{\bm{s}_i \in \cS\cC_{\cG_2}}\mathbf{x}^{\bm{s}_i}\left(\sum_{j=1}^{n_i}k_{ij}^2 \bm{v}_{ij}^2 - \sum_{j=1}^{m_i}k_{ij}^1 \bm{v}_{ij}^1\right) - \sum_{\bm{s}_i \in \cS\cC_{\cG_1}\setminus \cS\cC_{\cG_2}}\mathbf{x}^{\bm{s}_i}\sum_{j=1}^{m_i}k^1_{ij}\bm{v}_{ij}^1.
	\]
	We have that $\b{0} \in \mathit{RelInt}(V^{\cG_1}(\bm{s}_i))$ if $\bm{s}_i \in \cS\cC_{\cG_1}\setminus \cS\cC_{\cG_2}$, so we can choose $\cK^1$ with $k^1_{ij} >0$ so that each term of the second sum is $\b{0}$. Furthermore, for each $\bm{s}_i\in \cS\cC_{\cG_2}$,
	\[
	\sum_{j=1}^{n_i}k_{ij}^2 \bm{v}_{ij}^2 - \sum_{j=1}^{m_i}k_{ij}^1\bm{v}_{ij}^1 = \bm{w} - \sum_{j=1}^{m_i}k_{ij}^1 \bm{v}_{ij}^1,
	\]
	where $\bm{w} \in \mathit{RelInt}(V^{\cG_2}(\bm{s}_i))$. Because of condition (ii) we may also conclude that $\bm{w} \in \mathit{RelInt}(V^{\cG_1}(\bm{s}_i))$.  Hence, we can choose $k_{ij}^1>0$ so that
	\[
	\bm{w} - \sum_{j=1}^{m_i}k_{ij}^1\bm{v}_{ij}^1 = 0.
	\]
	With these choice of parameters we have $\bm{f}_{\cG_2(\cK^2)}  = \bm{f}_{\cG_1(\cK^1)}$. 
	
	Next, we show that $\cG_2 \sqsubseteq \cG_1$ imply (i) and (ii) hold.  First, if there is some source $\bm{s}_i \in \cS\cC_{\cG_2}$ but $\bm{s}_i \not \in \cS\cC_{\cG_1}$, then for general choice $\cK^2$ there will be a monomial $\mathbf{x}^{\bm{s}_i}$ in $\bm{f}_{\cG_2(\cK^2)}$ which cannot appear in $\bm{f}_{\cG_1(\cK^1)}$. Therefore, (i) must hold.  
	
	Next, in order to find a contradiction suppose, that for some $\bm{s}_i \in \cS\cC_{\cG_1}$ there is a vector $\b{w} \in \mathit{RelInt}(V^{\cG_2}(\bm{s}_i))$ but $\bm{w} \not \in \mathit{RelInt}(V^{\cG_1}(\bm{s}_i))$, so that (ii) does not hold. Then, since $\b{w} \in \mathit{RelInt}(V^{\cG_2}(\bm{s}_i))$, there is a choice $\cK^2$ so that  $\bm{f}_{\cG_2(\cK^2)}$ contains a term of the form
	\[
	\mathbf{x}^{\bm{s}_i}\sum_{j=1}^{n_i}k_{ij}^2\bm{v}_{ij}^2 = \mathbf{x}^{\bm{s}_i}\bm{w}.
	\]
	Then, 	
	for any choice of $\cK^1 \in \bR^{|\cE_{\cG_1}|}_{>0}$ the function $\bm{f}_{\cG_2(\cK^2)}(\mathbf{x})  - \bm{f}_{\cG_1(\cK^1)}(\mathbf{x})$ will include the term
	\[
	\left(\bm{w} - \sum_{j=1}^{m_i}k_{ij}^1\bm{v}_{ij}^1\right)\mathbf{x}^{\bm{s}_i}.
	\]
	Because   $\bm{w} \not \in \mathit{RelInt}(V^{\cG_1}(\bm{s}_i))$,
	this term is non-zero for any $\mathbf{x} \neq 0$.  Moreover, no other term could cancel it since no other monomials of the form $\mathbf{x}^{\bm{s}_i}$ can appear.  Thus, $\bm{f}_{\cG_2(\cK^2)}(\mathbf{x})  \ne \bm{f}_{\cG_1(\cK^1)}(\mathbf{x})$.  Since this conclusion holds for \textit{any} choice of $\cK^1$, we  conclude that  we do not have $\cG_2 \sqsubseteq \cG_1$, which is a contradiction. 
\end{proof}

\begin{remark}\label{splitting}
	For any edge $e$ of a network $\cG$, we may generate a new network $\cG_e$ by removing $e$ and adding edges $e_1,...,e_d$ such that $\b{s}(e_1) =\cdots =\b{s}(e_d) = \b{s}(e)$ and $\b{v}(e) \in \mathit{RelInt}(\mathit{Cone}(\{\b{v}(e_1),...,\b{v}(e_d)\}))$. Then, \cref{containcondit} implies that $\cG_e$ contains the dynamics of $\cG$.  We often refer to this as ``splitting" the reaction vector $\b{v}(e)$. \hfill $\diamond$
\end{remark}

\cref{containcondit} shows when the dynamical systems generated from one network are contained within those generated by another.  It is reminiscent of Theorem 4.4 of \citep{C-P}, which gives conditions for when the two sets of dynamical systems intersect (i.e., when they have the capacity for dynamical equivalence).  Moreover, their proofs are similar. However, a missing case in the proof of Theorem 4.4 of \citep{C-P} was noted by G\'abor Szederk\'enyi in \citep{sz2009}.    A complete statement and proof of Theorem 4.4 of \citep{C-P} appear below.

\begin{theorem}[Complete version of Theorem 4.4 of \citep{C-P}]
	Under the mass-action kinetics assumption, two chemical reaction networks represented by the graphs $\cG_1$ and $\cG_2$ have the capacity for dynamical equivalence, i.e., $\cG_1 \sqcap  \cG_2$, if and only if for every $\bm{s} \in \cS\cC_{\cG_1} \cup \cS\cC_{\cG_2}$
	\[
	\mathit{RelInt}(V^{\cG_2}(\bm{s})) \cap \mathit{RelInt}(V^{\cG_1}(\bm{s})) \neq \emptyset
	\]
	where we take $V^{\cG}(\bm{s}) = \bm{0}$ if $\bm{s} \not \in \cS\cC_{\cG}$.
\end{theorem}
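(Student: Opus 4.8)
The plan is to mirror the structure of the proof of \cref{containcondit}, replacing the containment condition with an intersection condition. The core observation is the same: for any rate constants $\cK_1,\cK_2$, the difference $\bm{f}_{\cG_1(\cK_1)}(\mathbf{x}) - \bm{f}_{\cG_2(\cK_2)}(\mathbf{x})$ can be collected by the distinct monomials $\mathbf{x}^{\bm{s}}$ indexed by $\bm{s}\in\cS\cC_{\cG_1}\cup\cS\cC_{\cG_2}$, and since distinct monomials are linearly independent as functions, $\bm{f}_{\cG_1(\cK_1)} = \bm{f}_{\cG_2(\cK_2)}$ holds if and only if the vector coefficient of each $\mathbf{x}^{\bm{s}}$ agrees between the two networks. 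The coefficient contributed by $\cG_\ell$ at source $\bm{s}$ is $\sum_{j} k^\ell_{ij}\bm{v}^\ell_{ij}$, a strictly positive combination of the generators of $V^{\cG_\ell}(\bm{s})$.

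The fact I would isolate first is the convex-geometry lemma that, for a finite set $S$ of vectors, the set of strictly positive combinations $\{\sum_{\bm{v}\in S}\lambda_{\bm{v}}\bm{v} : \lambda_{\bm{v}} > 0\}$ equals $\mathit{RelInt}(\mathit{Cone}(S))$; in the degenerate case $S=\emptyset$ this is $\{\bm{0}\}$, matching the convention $V^{\cG}(\bm{s})=\{\bm{0}\}$ for $\bm{s}\notin\cS\cC_{\cG}$. This says precisely that, as $\cK_\ell$ ranges over all positive rate constants, the coefficient of $\mathbf{x}^{\bm{s}}$ produced by $\cG_\ell$ ranges over exactly $\mathit{RelInt}(V^{\cG_\ell}(\bm{s}))$. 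The ``$\bm{0}$ is a strictly positive combination'' special case of this is exactly Stiemke's Theorem (\cref{stiemke}), which I would invoke to handle sources lying in only one network.

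With this in hand the two directions are short. For necessity, suppose $\cG_1\sqcap\cG_2$, witnessed by $\cK_1,\cK_2$, and fix $\bm{s}\in\cS\cC_{\cG_1}\cup\cS\cC_{\cG_2}$. Matching the $\mathbf{x}^{\bm{s}}$-coefficients forces a common vector $\bm{w}$ which, by the lemma, lies in $\mathit{RelInt}(V^{\cG_1}(\bm{s}))$ and in $\mathit{RelInt}(V^{\cG_2}(\bm{s}))$ simultaneously; if $\bm{s}$ is absent from one network the corresponding coefficient, and hence $\bm{w}$, is forced to be $\bm{0}$, and $\bm{0}\in\mathit{RelInt}(V^{\cG_\ell}(\bm{s}))$ is exactly nonemptiness of the intersection with $\{\bm{0}\}$. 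For sufficiency, assume the intersection is nonempty for every such $\bm{s}$; choose $\bm{w}_{\bm{s}}$ in the intersection and, using the lemma in reverse, pick strictly positive rate constants in each network realizing $\bm{w}_{\bm{s}}$ as the $\mathbf{x}^{\bm{s}}$-coefficient. Because distinct sources use disjoint edge sets, and hence independent rate constants, these choices combine across all $\bm{s}$ to give $\bm{f}_{\cG_1(\cK_1)} = \bm{f}_{\cG_2(\cK_2)}$.

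The step I expect to be the main obstacle---and the one where the gap in the original Theorem 4.4 lived---is the treatment of sources $\bm{s}$ belonging to only one of the two networks. There the condition degenerates to requiring $\bm{0}\in\mathit{RelInt}(V^{\cG_\ell}(\bm{s}))$, i.e. that the reactions out of $\bm{s}$ in that single network admit a strictly positive vanishing combination; the whole point is that ranging $\bm{s}$ over the full union $\cS\cC_{\cG_1}\cup\cS\cC_{\cG_2}$ (rather than the intersection) together with the convention $V^{\cG}(\bm{s})=\{\bm{0}\}$ is what makes the criterion both necessary and sufficient. I would state the convex-geometry lemma and its Stiemke-based degenerate case cleanly enough that this case is not silently dropped.
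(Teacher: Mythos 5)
Your proposal is correct and takes essentially the same route as the paper's proof: sufficiency by picking a vector $\bm{w}_{\bm{s}}$ in each nonempty intersection and letting both networks generate $f(\mathbf{x})=\sum_{\bm{s}}\mathbf{x}^{\bm{s}}\bm{w}_{\bm{s}}$, and necessity by matching the coefficient of each monomial of a commonly generated polynomial, with the sources appearing in only one network (or with cancelling coefficients) handled via $\bm{0}\in\mathit{RelInt}(V^{\cG_\ell}(\bm{s}))$. The only difference is presentational: you isolate explicitly the convex-geometry fact that strictly positive combinations of the generators are exactly $\mathit{RelInt}(\mathit{Cone}(S))$, which the paper uses implicitly in both directions.
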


\begin{proof}

	We begin by showing that the above conditions imply that $\cG_1$ and $\cG_2$ have the capacity for dynamical equivalence.  
	We enumerate the set  $\cS\cC_{\cG_1} \cup \cS\cC_{\cG_2}$, and for each $\bm{s}_i$ in that set, we choose 
	\[
		\bm{w}_i \in \mathit{RelInt}(V^{\cG_2}(\bm{s})) \cap \mathit{RelInt}(V^{\cG_1}(\bm{s})).
	\]
	Let
	\[
		f(\mathbf{x}) = \sum_i \mathbf{x}^{\bm{s}_i} \bm{w}_i.
	\]
	Then both $\cG_1$ and $\cG_2$ generate $f$ and we conclude that $\cG_1 \sqcap  \cG_2$,.

	Next suppose that $\cG_1$ and $\cG_2$ have the capacity for dynamical equivalence. Hence, by
	 \cref{overlap}, there exists some polynomial
	\begin{equation}\label{adkjfa;jf}
	f(\mathbf{x}) = \sum_{i = 1}^n k_i \mathbf{x}^{\bm{s}_i}\b{w}_i,
	\end{equation}
	with $\bm{s}_i \ne \bm{s}_j$ for $i \ne j$, 
	that can be generated by both $\cG_1$ and $\cG_2$.  Let $\mathcal{T} = \{\b{s}_1,\dots, \b{s}_n\}$ and note that $\mathcal{T} \subset  \cS\cC_{\cG_1} \cup \cS\cC_{\cG_2}$.  For any $\b{s} \in ( \cS\cC_{\cG_1} \cup \cS\cC_{\cG_2}) \setminus \mathcal{T}$, implying no term in the polynomial corresponds with $\b{s}$, we immediately have that
		\[
		\b{0} \in \mathit{RelInt}(V^{\cG_2}(\bm{s})) \cap \mathit{RelInt}(V^{\cG_1}(\bm{s})), 
	\]
	as desired.  Furthermore, for each $\b{s}_i \in \mathcal{T}$,  we have 
	\[
		\emptyset \ne  \{\alpha \b{w}_i : \alpha > 0\} \subseteq  \mathit{RelInt}(V^{\cG_2}(\bm{s_i})) \cap \mathit{RelInt}(V^{\cG_1}(\bm{s_i})) 
		\]
		where containment follows by the fact that each of $\cG_1$ and $\cG_2$ generated $f$ in \cref{adkjfa;jf}.  Thus, we have shown the condition holds for all $\b{s} \in \cS\cC_{\cG_1} \cup \cS\cC_{\cG_2}$ and the proof is complete.
\end{proof}

\subsection{Endotactic Networks}

We turn to our study of endotactic and strongly endotactic networks.

\begin{lemma}\label{effectively_endo_endo}
	Let $\cG= (\cV,\cE)$ and $\tilde{\cG}= (\tilde{\cV},\tilde{\cE})$ be  reaction networks such that $\cG \sqsubseteq \tilde{\cG}$. If $\tilde{\cG}$ is  endotactic, then $\cG$ is  endotactic.  Moreover, if $\tilde{\cG}$ is strongly endotactic, then $\cG$ is strongly  endotactic.
\end{lemma}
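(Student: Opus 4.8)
The plan is to translate everything into the geometry of the source cones and then reduce the statement to a single convex-analysis fact. First I would rewrite the hypothesis $\cG \sqsubseteq \tilde{\cG}$ using \cref{containcondit}: it is equivalent to (i) $\cS\cC_{\cG} \subseteq \cS\cC_{\tilde{\cG}}$ and (ii) $\mathit{RelInt}(V^{\cG}(\bm{s})) \subseteq \mathit{RelInt}(V^{\tilde{\cG}}(\bm{s}))$ for every $\bm{s}$. Taking closures in (ii) gives $V^{\cG}(\bm{s}) \subseteq V^{\tilde{\cG}}(\bm{s})$, and applying (ii) at a source $\bm{s} \in \cS\cC_{\tilde{\cG}} \setminus \cS\cC_{\cG}$, where $V^{\cG}(\bm{s}) = \{\bm{0}\}$, forces $\bm{0} \in \mathit{RelInt}(V^{\tilde{\cG}}(\bm{s}))$; that is, every source of $\tilde{\cG}$ absent from $\cG$ is \emph{balanced}. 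I would also reformulate endotacticity purely in cone terms. Say that $\bm{s}$ has a \emph{negative} (resp.\ \emph{positive}) \emph{direction} for $\bm{w}$ if $V(\bm{s})$ meets $\{\bm{y}:\bm{w}\cdot\bm{y}<0\}$ (resp.\ $\{\bm{y}:\bm{w}\cdot\bm{y}>0\}$); since a nonnegative combination of the reaction vectors at $\bm{s}$ has negative (positive) inner product with $\bm{w}$ iff one of those vectors does, $\cG$ is endotactic iff for every $\bm{w}$ and every $\bm{s}\in\cS\cC_{\cG}$ with a negative direction there is some $\bm{s}'\in\cS\cC_{\cG}$ with $\bm{w}\cdot\bm{s}' < \bm{w}\cdot\bm{s}$ having a positive direction.

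The main obstacle is that (ii) makes the cones \emph{shrink} when passing from $\tilde{\cG}$ to $\cG$. The negative-direction hypothesis transfers upward for free because $V^{\cG}\subseteq V^{\tilde{\cG}}$, but the positive-direction witness produced by endotacticity of $\tilde{\cG}$ lives in the larger cone $V^{\tilde{\cG}}(\bm{s}')$ and must be pushed into the smaller cone $V^{\cG}(\bm{s}')$, which a priori may miss the positive halfspace. The crux is the following convex-analysis fact: if a cone $K$ satisfies $\bm{w}\cdot\bm{y}\ge 0$ for all $\bm{y}\in K$ but $\bm{w}\cdot\bm{y}_0>0$ for some $\bm{y}_0\in K$, then $\bm{w}\cdot\bm{r}>0$ for \emph{every} $\bm{r}\in\mathit{RelInt}(K)$ --- otherwise a relative-interior point would minimize $\bm{w}\cdot(\cdot)$ over $K$, and perturbing it away from $\bm{y}_0$ within $K$ would produce a point with negative inner product. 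This fact is exactly what lets me cross from the larger cone to the smaller one through the nested relative interiors in (ii).

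With these tools the endotactic statement goes as follows. Fix $\bm{w}$ and $\bm{s}\in\cS\cC_{\cG}$ with a negative direction, and set $c=\bm{w}\cdot\bm{s}$. Since $\tilde{\cG}$ also has a negative direction at $\bm{s}$ and is endotactic, the set $P$ of sources $\bm{s}'\in\cS\cC_{\tilde{\cG}}$ with $\bm{w}\cdot\bm{s}'<c$ and a positive direction is nonempty; let $\hat{\bm{s}}$ minimize $\bm{w}\cdot(\cdot)$ over $P$. By minimality and endotacticity of $\tilde{\cG}$, $\hat{\bm{s}}$ has no negative direction, since one would yield a strictly lower member of $P$; hence $V^{\tilde{\cG}}(\hat{\bm{s}})\subseteq\{\bm{y}:\bm{w}\cdot\bm{y}\ge 0\}$ while still meeting the open positive halfspace. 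The key fact then gives $\bm{w}\cdot\bm{r}>0$ for every $\bm{r}\in\mathit{RelInt}(V^{\tilde{\cG}}(\hat{\bm{s}}))$; in particular $\bm{0}\notin\mathit{RelInt}(V^{\tilde{\cG}}(\hat{\bm{s}}))$, so by the balancedness observation $\hat{\bm{s}}$ cannot be absent from $\cG$, i.e.\ $\hat{\bm{s}}\in\cS\cC_{\cG}$. Finally, any $\bm{r}\in\mathit{RelInt}(V^{\cG}(\hat{\bm{s}}))$ lies in $\mathit{RelInt}(V^{\tilde{\cG}}(\hat{\bm{s}}))$ by (ii) and thus satisfies $\bm{w}\cdot\bm{r}>0$, so $V^{\cG}(\hat{\bm{s}})$ meets the positive halfspace. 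Then $\hat{\bm{s}}$ is the required witness in $\cG$, so $\cG$ is endotactic.

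For the strongly endotactic claim the same machinery applies, with the witness now required to be a global $\bm{w}$-minimizer over the source set. I would first record the analogous reformulation: whenever some source has a negative direction, $(A)$ no global $\bm{w}$-minimizing source has a negative direction, and $(B)$ some global $\bm{w}$-minimizing source has a positive direction. The one new point is that the minima of $\bm{w}\cdot(\cdot)$ over $\cS\cC_{\cG}$ and over $\cS\cC_{\tilde{\cG}}$ must agree: if $\tilde{\cG}$ had a source strictly below every source of $\cG$, all $\tilde{\cG}$-minimizers would be balanced, hence subspaces with no negative direction by $A_{\tilde{\cG}}$, forcing them into $\bm{w}^{\perp}$ with no positive direction either, contradicting $B_{\tilde{\cG}}$. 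Once the minima coincide, $A_{\tilde{\cG}}$ and $B_{\tilde{\cG}}$ transfer to $\cG$ exactly as above --- the positive-direction minimizer from $B_{\tilde{\cG}}$ is not balanced by the key fact, hence a genuine source of $\cG$ carrying a positive direction in its smaller cone, while the $\cG$-minimizers, being $\tilde{\cG}$-minimizers, inherit the absence of negative directions from $A_{\tilde{\cG}}$. This establishes $A_{\cG}$ and $B_{\cG}$, so $\cG$ is strongly endotactic. I expect the cone-shrinking obstruction, namely transferring the positive witness into the smaller cone and resolved by the key convex fact, to be the decisive step throughout.
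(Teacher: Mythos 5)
Your proposal is correct and is essentially the paper's own argument: both proofs translate $\cG \sqsubseteq \tilde{\cG}$ through \cref{containcondit}, select a witness whose source minimizes $\bm{w}\cdot(\cdot)$ among the positive witnesses supplied by endotacticity of $\tilde{\cG}$, use that minimality (plus endotacticity again) to show the witness's cone lies in the closed nonnegative halfspace, conclude that its relative interior lies in the open positive halfspace (the paper does this via Stiemke's theorem plus a direct positive-combination argument, you via a single relative-interior lemma), and hence that the witness source is unbalanced, belongs to $\cS\cC_{\cG}$, and carries a positive direction in the smaller cone $V^{\cG}$. The only noteworthy difference is in the strongly endotactic case, where you explicitly prove that the $\bm{w}$-minima over $\cS\cC_{\cG}$ and $\cS\cC_{\tilde{\cG}}$ coincide --- a point your source-level reformulation genuinely needs, whereas the paper's one-line adaptation gets it for free because its witness is shown to lie in $\cS\cC_{\cG}$ --- so both treatments are sound.
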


\begin{proof}
	 Let $e_i \in \cE$ and $\b{w}$ be such that $\b{w}\cdot \bm{v}(e_i) < 0$.  We must show that  there exists $e_j \in \cE$ such that  $\bm{w} \cdot (\bm{s}(e_j)-\bm{s}(e_i)) < 0$ and $\bm{w} \cdot \bm{v}(e_j) > 0$.

	Because $\cG \sqsubseteq \tilde{\cG}$, we know from \cref{containcondit} that   $\bm{s}(e_i) \in \cS\cC_{\tilde{\cG}}$, and $\bm{v}(e_i) \in V^{\tilde{\cG}}(\bm{s}(e_i))$. There then exists $\tilde{e}_i\in \tilde{\cE}$ such that $\bm{s}(\tilde{e}_i) = \bm{s}(e_i)$ and $\bm{v}(\tilde{e}_i) \cdot \b{w}<0$, since $\b{w}\cdot \bm{v}(e_i) < 0$ 
\[
	\bm{v}(e_i) \in V^{\tilde{\cG}}(\bm{s}(e_i))\quad \Rightarrow \quad \b{v}(e_i) = \sum_{\tilde{e}_i|\bm{s}(\tilde{e}_i) = \bm{s}(e_i)} \lambda_i \b{v}(\tilde{e}_i)\quad \lambda_i \geq 0
\]
Because $\tilde{\cG}$ is endotactic, there is some $\tilde{e}_j$ such that $\b{w}\cdot (\bm{s}(\tilde{e}_j)-\bm{s}(\tilde{e}_i))<0$ and $\bm{v}(\tilde{e}_j)\cdot \b{w} > 0$.  Moreover, we can choose $\tilde e_j$ so that $\b w\cdot (\b s(\tilde e_j) - \b s(\tilde e_i))$ is minimal over edges satisfying  $\bm{v}(\tilde{e}_j)\cdot \b{w} > 0$. 
	
We can complete the proof by proving two statements.
	\begin{enumerate}[(1)]
	\item $\b s(\tilde e_j) \in \cS\cC_{\cG}$ (the source complexes for $\cG$).  
	\item If $\b u \in \mathit{RelInt}(V^{{\cG}}(\bm{s}(\tilde{e}_j))$, then $\b u \cdot \b w > 0$.
	\end{enumerate}
	Combining the above gives the existence of the necessary edge in $\cE$.
	
We prove (1) by showing that $\b{0} \not \in \mathit{RelInt}(V^{\tilde{G}}(\b{s}(\tilde{e}_j))$. Then, combined with \cref{containcondit}, $\b s(\tilde e_j) \in \cS\cC_{\cG}$. Suppose that 	$\b{0} \in  \mathit{RelInt}(V^{\tilde{G}}(\b{s}(\tilde{e}_j))$. Then, by \cref{stiemke}, there is some $\tilde{e}_k \in \tilde{\cE}$ such that $\b{s}(\tilde{e}_k) = \b{s}(\tilde{e}_j)$, and $\b{v}(\tilde{e}_k) \cdot \b{w} <0$. However, the minimality of $\b w\cdot (\b s(\tilde e_j) - \b s(\tilde e_i))$ over edges satisfying  $\bm{v}(\tilde{e}_j)\cdot \b{w} > 0$ then implies that there is no edge $\tilde{e}_l$ with $\b{v}(\tilde{e}_l)\cdot \b{w} > 0$ and $\b{w}\cdot(\b{s}(\tilde{e}_l)\b{w}(\tilde{e}_k)) < 0$. This contradicts the condition that $\tilde{\cG}$ is endotactic.
	
	To prove (2), let $\b u \in \mathit{RelInt}(V^{\tilde{\cG}}(\bm{s}(\tilde{e}_j))$.  Then, there are $\lambda_k>0$ and $\tilde e_k\in \tilde \cG$, each with source $\b s(\tilde e_j)$,  for which
	\[
	\b u = \sum_{k} \lambda_k \b v(\tilde e_k) =   \lambda_j \b v(\tilde e_j)  + \sum_{k\ne j} \lambda_k \b v(\tilde e_k).
	\]
	Dotting with $\b w$  yields
	\[
		\b u \cdot \b w = \lambda_j \b v(\tilde e_j) \cdot \b w + \sum_{k\ne j} \lambda_{k} \b v(\tilde e_k) \cdot \b w.
	\]
	If $\b u \cdot \b w  \le 0$, then, because $\b v (\tilde e_j ) \cdot \b w > 0$,  we would be forced to conclude $\b v(\tilde e_i) \cdot \b w < 0$ for some $i \ne j$.  Since $\tilde \cG$ is endotactic, this would contradict the minimality of $\b w\cdot (\b s(\tilde e_j) - \b s(\tilde e_i))$.  Hence, we can conclude that $\b u \cdot \b w  >0$. \cref{containcondit} implies that there is such a $\b{u} \in \mathit{RelInt}(V^{\cG})(\b{s}(\tilde{e}_j))$.

	Turning to the case of $\tilde \cG$ being strongly endotactic.  The proof that $\cG$ is strongly endotactic is identical except the line ``... is minimal over edges satisfying  $\bm{v}(\tilde{e}_j)\cdot \b{w} > 0$'' is changed to ``...  is minimal over all edges of $\tilde \cG$'' and by noting that the source complexes of $\cG$ are a subset of the source complexes of $\tilde \cG$.
\end{proof}

\subsection{Endotactic and Source-Only Networks}

We  introduce a new concept, that of  ``source-only'' networks, which we will demonstrate is a useful framework. 

\begin{definition}
A chemical reaction network $\cG = (\cV,\cE)$ is said to be \textbf{source-only} if, for any $e\in \cE$, $\bm{t}(e) = \b{y}$ implies that $\bm{s}(e^*) = \b{y}$ for some $e^* \in E$. A mass action system $\frac{d\mathbf{x}}{dt} = \b{f}(\mathbf{x})$ is said to be \textbf{source-only} if it can be generated by a source-only E-graph. A chemical reaction network $\cG$ is said to be \textbf{effectively source-only} if every system generated by $\cG$ is source-only.
\end{definition}
\noindent That is, a network is source-only if $\cV$ does not contain any nodes that are \textit{only} product nodes. 

\begin{example}
	\label{example1}
	Consider the chemical reaction network
	\begin{equation}
	\label{ex1}
	\xymatrix@R=1mm@C=3mm{
		3X_1 \ar[rr]^{k_1} & & 3X_2 \ar[ddl]^{k_2} & & & &  \\
		& & & & X_1 + X_2 \ar[rr]^{k_4}&& 2X_1 + 2X_2 .\\
		& \emptyset \ar[uul]_{k_3} & & & & &
	}
	\end{equation}
	The network can be defined by the E-graph in \cref{figure1}(a). We can see that the network is strongly endotactic. We next seek to represent the network as a source-only network. To do so we must dispose of the product complex $2X_1 + 2X_2$. We see that the fourth reaction can be split (in the sense of \cref{splitting}) to give the following dynamically equivalent reaction network
	\begin{equation}
	\label{ex12}
	\xymatrix@R=3mm@C=3mm{
		&  \ar[dl]_{k_4} X_1 + X_2 \ar[dr]^{k_4} & \\
		3X_1 \ar[rr]^{k_1} & & 3X_2 \ar[dl]^{k_2}\\
		& \ar[ul]^{k_3} \emptyset &
	}
	\end{equation}
	This network can be defined by the E-graph in \cref{figure1}(b).   Thus we see that the network \eqref{ex1} is effectively source-only.  \hfill $\triangle$
	\begin{figure}
	\centering
\includegraphics[scale =1]{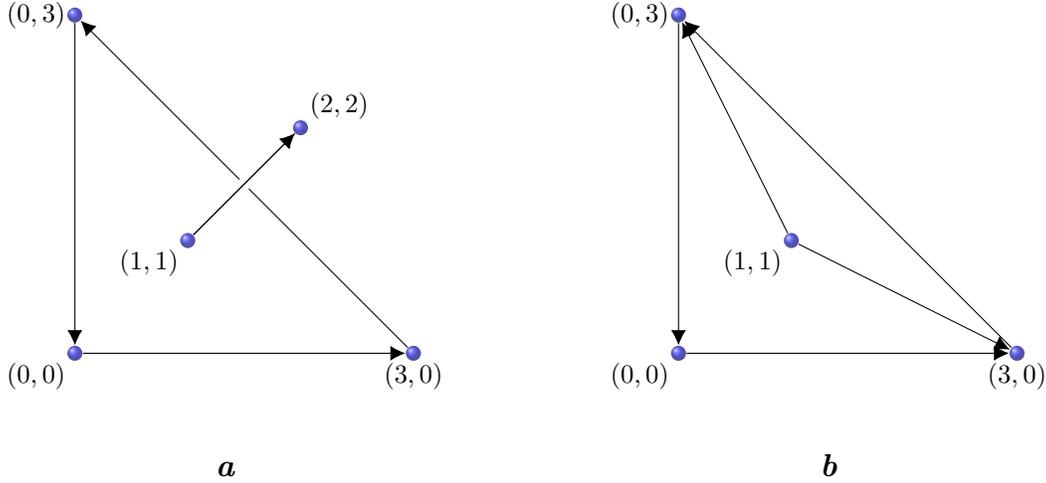}
	\caption{Two E-graphs $\cG$ (a) and $\tilde \cG$ (b) which have the capability of generating the same mass-action system (i.e. $\cG  \sqcap \tilde \cG$). Notice that, while both networks are strongly endotactic, the network $\tilde \cG$ is source-only which the network $\cG$ is not. There is, however, no weakly reversible E-graph $\bar \cG$ such that $\cG \sqcap \bar \cG$.}\label{figure1}
	\end{figure}	
\end{example}

We now prove the following.

\begin{lemma}
\label{sourceonly}
Let $\cG = (\cV,\cE)$ be an endotactic network. Then, there is exists a source-only network $\tilde{\cG}= (\tilde{\cV},\tilde{\cE})$ such that $\cG \sqsubseteq \tilde{\cG}$, and the nodes of $\tilde{\cG}$ are the source nodes of $\cG$. Therefore, every endotactic network is effectively source only.
\end{lemma}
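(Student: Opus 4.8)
The plan is to invoke \cref{containcondit}: I will construct a network $\tilde\cG$ whose node set is exactly the set of source nodes $\cS\cC_{\cG}$ of $\cG$, and whose every edge points from one source node of $\cG$ to another. Such a $\tilde\cG$ is automatically source-only, since every target is by construction a node of $\tilde\cG$, and every node of $\tilde\cG$ is the source of at least one edge (each $\bm s \in \cS\cC_{\cG}$ carries an outgoing reaction in $\cG$, which I replace by outgoing edges in $\tilde\cG$). Consequently $\cS\cC_{\tilde\cG} = \cS\cC_{\cG}$, so condition (i) of \cref{containcondit} holds, and the problem reduces to arranging condition (ii), namely $\mathit{RelInt}(V^{\cG}(\bm s)) \subseteq \mathit{RelInt}(V^{\tilde\cG}(\bm s))$ for every $\bm s \in \cS\cC_{\cG}$. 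The final ``effectively source-only'' conclusion is then immediate from the definition.

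The geometric heart of the argument is the claim that for every source $\bm s$,
\[
V^{\cG}(\bm s) \subseteq \mathit{Cone}(\{\bm s' - \bm s : \bm s' \in \cS\cC_{\cG}, \; \bm s' \ne \bm s\}).
\]
I would prove this with \cref{lemma01} (Farkas): if some $\bm v \in V^{\cG}(\bm s)$ were not in this cone, there would be a $\bm w$ with $\bm w \cdot (\bm s' - \bm s) \ge 0$ for every source $\bm s'$ and $\bm w \cdot \bm v < 0$. Writing $\bm v$ as a nonnegative combination of the reaction vectors out of $\bm s$, some reaction $e_0$ with $\bm s(e_0) = \bm s$ must satisfy $\bm w \cdot \bm v(e_0) < 0$. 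Endotacticity then supplies an edge $e_j$ with $\bm w \cdot (\bm s(e_j) - \bm s) < 0$; but $\bm s(e_j)$ is a source node, contradicting $\bm w \cdot (\bm s' - \bm s) \ge 0$. Hence each reaction vector $\bm v(e)$ out of $\bm s$ can be written as a strictly positive combination $\bm v(e) = \sum_{\bm s' \in S_e} \lambda^{(e)}_{\bm s'} (\bm s' - \bm s)$ over some subset $S_e \subseteq \cS\cC_{\cG}\setminus\{\bm s\}$.

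I would then build $\tilde\cG$ by ``splitting'' (in the sense of \cref{splitting}) each reaction: for every $e$ with source $\bm s$ and every $\bm s' \in S_e$, include the edge $\bm s \to \bm s'$. This yields $V^{\tilde\cG}(\bm s) = \mathit{Cone}\big(\bigcup_{e}\{\bm s' - \bm s : \bm s' \in S_e\}\big)$. To verify (ii), take any $\bm z \in \mathit{RelInt}(V^{\cG}(\bm s))$; since the relative interior of a finitely generated cone is exactly the set of its \emph{strictly} positive generator-combinations, I can write $\bm z = \sum_e c_e \bm v(e)$ with every $c_e > 0$. Substituting the representations of the $\bm v(e)$ expresses $\bm z$ as a combination in which every generator of $V^{\tilde\cG}(\bm s)$ appears with strictly positive coefficient $c_e \lambda^{(e)}_{\bm s'} > 0$, so $\bm z \in \mathit{RelInt}(V^{\tilde\cG}(\bm s))$. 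Then \cref{containcondit} gives $\cG \sqsubseteq \tilde\cG$.

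The step I expect to require the most care is the relative-interior bookkeeping, not the containment of cones itself. It is tempting to simply let $\tilde\cG$ contain an edge to \emph{every} other source node, i.e. take $V^{\tilde\cG}(\bm s) = \mathit{Cone}(\{\bm s' - \bm s\})$; but then $\mathit{RelInt}(V^{\cG}(\bm s))$ can lie on the \emph{relative boundary} of this larger cone (for instance when $V^{\cG}(\bm s)$ is a ray that is an extreme ray of the full cone), which would violate (ii). Routing the edges through a strictly positive decomposition of each individual reaction vector is precisely what forces every generator of $V^{\tilde\cG}(\bm s)$ to appear with positive weight in the expansion of an arbitrary relative-interior point of $V^{\cG}(\bm s)$, and this is the crux that makes condition (ii) hold.
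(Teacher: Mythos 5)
Your proposal is correct and follows essentially the same route as the paper: take the source complexes of $\cG$ as the node set, use Farkas' Lemma (\cref{lemma01}) together with endotacticity to show every reaction vector lies in the cone of directions toward other source complexes, replace each edge by edges toward sources chosen so that the reaction vector sits in the \emph{relative interior} of the resulting sub-cone, and then conclude via \cref{containcondit}. The only cosmetic difference is how that relative-interior condition is secured: you take the support of a strictly positive representation of each $\bm{v}(e)$, while the paper takes an inclusion-minimal generating subset $C_i$; both devices implement the same idea, and your explicit check that a relative-interior point of $V^{\cG}(\bm{s})$ becomes a strictly positive combination of all generators of $V^{\tilde{\cG}}(\bm{s})$ matches what the paper leaves implicit.
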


\begin{proof}
We assume $\cG = (\cV,\cE)$ is endotactic and will construct a source-only network $\tilde{\cG}=(\tilde{\cV},\tilde{\cE})$ such that $\cG \sqsubseteq \tilde{\cG}$. 

We take the nodes of our new network to be the source complexes of the original network.  That is,  $\tilde \cV = \cS\cC_{\cG}$.  Next, we  define $\tilde \cE$ in the following way. First, we index the edges of $\cE$ as $e_1,...,e_k$. For each edge $e_i \in \cE$, if $\b{t}(e_i) \in \cS\cC_{\cG}$, we include $e_i\in \tilde{\cE}$. Otherwise, let $C_i^*$ be the set of edges of the complete graph on $\cS\cC_{\cG}$ with source $\b{s}(e_i)$, so that $\{\b{v}(e^*)|e^*\in C_i^*\} = \{\b{s}^*-\b{s}(e_i)|\b{s}^* \in (\cS\cC_{\cG} \setminus \{\b{s}(e_i)\})\}$. 

We next show that $\b{v}(e_i) \in \mathit{Cone}(\{\b{v}(e^*)|e^*\in C_i^*\}) = \mathit{Cone}(\{\b{s}^*-\b{s}(e_i)|\b{s}^* \in (\cS\cC_{\cG} \setminus \{\b{s}(e_i)\})\})$. If this was not the case, then by \cref{lemma01} there exists some $\b{w}\in \bR^d$ such that $\b{w}\cdot \b{v}(e_i) < 0$ and $\b{w}\cdot (\b{s}^* - \b{s}(e_i)) \geq 0$ for all $\b{s}^*\in \cS\cC_{\cG}$. However, if there exists any $\b{w}\in \bR^d$ such that $\b{w}\cdot \b{v}(e_i) < 0$, the condition that $\cG$ is endotactic implies that for some source $\b{s}^* \in \cS\cC_{\cG}$, and therefore some $e^*\in C_i^*$ with $\b{v}(e^*) = \b{s}^*-\b{s}(e_i)$, $\b{w}\cdot \b{v}(e^*) < 0$, because $\cS\cC_{\cG} = \{\b{s}(e^*)|e^*\in C_i^*\} \cup \{\b{s}(e_i)\}$.

Let $C_i\subset C_i^*$ be the set so that $\mathit{Cone}(\{\b{v}(e)|e\in C_i\})$ is minimal over subsets of $C_i^*$ in the sense of inclusion while still satisfying $\b{v}(e_i) \in \mathit{Cone}(\{\b{v}(e)|e\in C_i\})$. Therefore, $\b{v}(e_i) \in \mathit{RelInt}(\mathit{Cone}(\{\b{v}(e)|e\in C_i\}))$. Then, we add the edges $\tilde{e}\in C_i$ to $\tilde{\cE}$. By this construction, we have for each $\b{s} \in \cS\cC_{\cG}$, 
\[
\mathit{RelInt}(V^{\cG}(\bm{s})) \subseteq \mathit{RelInt}(V^{\tilde \cG}(\bm{s}))
\]
and we can conclude that $\cG \sqsubseteq \tilde{\cG}$, where $\tilde{\cG}$ is clearly source only.
\end{proof}

Notice that while \cref{sourceonly} guarantees that any endotactic dynamical system can be generated by source-only network, this source only network is not necessarily endotactic. The following example shows that we cannot guarantee that any endotactic dynamical system can be generated by a source-only endotactic network. 

\begin{example}

In \cref{figure:endocounter} (a), we give an example of a strongly endotactic network $\cG$ such that if $\tilde{\cG}$ is source only and $\cG \sqsubseteq \tilde{\cG}$, then $\tilde{\cG}$ is not endotactic. Splitting the edge labeled $e$ in the sense of \cref{splitting} requires adding a new edge $e^1$   such that $\b{v}(e^1) \cdot (1,0) <0$ and an edge $e^2$ such that $\b{v}(e^2) \cdot (0,-1) <0$. Then, $e^1$ can be used to show that the resulting network is not endotactic. Therefore, we cannot split any edges to maintain the endotactic property. It follows that to make the network source-only and endotactic, we must add a source node.

Consider again the node labeled $e$ in \cref{figure:endocounter} (a). To make the network source-only without splitting any edges, we must add source node at some point $\b{s}^* = \b{s}(e) + \alpha \b{v}(e)$ for $\alpha >0$. To maintain the endotactic property and insure that the new network $\tilde{\cG}$ contains the dynamics of $\cG$, we must add edges with source $\b{s}^*$ such that $\b{0} \in V^{\tilde{G}}(\b{s}^*)$. However, this requires either adding a new node which is only a target or adding a vector $e^1$ such that $\b{v}(e^1) \cdot (1,0) <0$ which can be used to show the resulting network is not endotactic. To make this target into a source, we have the same requirements. We conclude that there is no way to construct endotactic and source-only $\tilde{\cG}$ such that $\cG \sqsubseteq \tilde{\cG}$. 
\begin{figure}
\centering
\includegraphics[scale = 1]{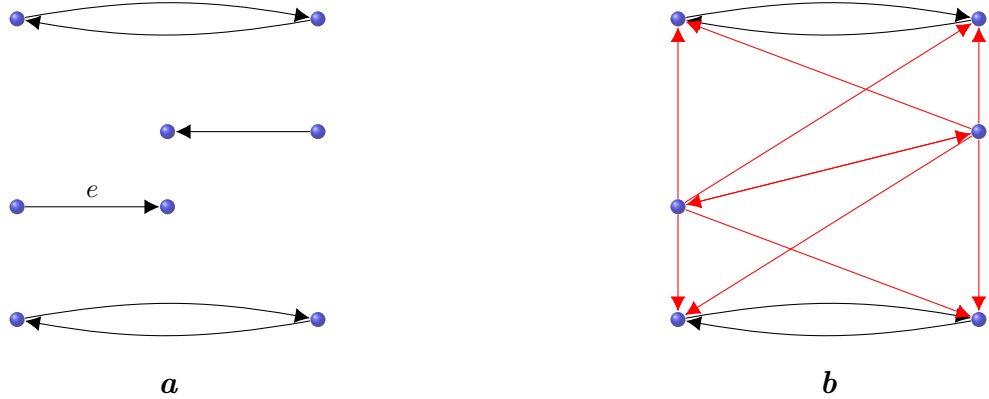}
\caption{(a) A strongly endotactic network $\cG$ such that if $\tilde{\cG}$ is source only and $\cG \sqsubseteq \tilde{\cG}$, then $\tilde{\cG}$ is not endotactic. (b) An example of a network $\tilde{\cG}$ constructed as in the proof of \cref{sourceonly} such that $\tilde{\cG}$ is source only and $\cG \sqsubseteq \tilde{\cG}$.}
\label{figure:endocounter}
\end{figure}
\hfill $\triangle$
\end{example}

Notice that the set of monomials $\{\mathbf{x}^{\bm{s}_i}\}$ of a generated polynomial $\bm{f}_{\cG(\cK)}$ corresponds to a subset of the source complexes $\cS\cC_{\cG}$, and including (but not limited to) the set of sources $s\in \cS\cC_{\cG}$ that have $\b{0} \not\in \mathit{RelInt}(V^{\cG}(s))$. To limit networks that must be considered given a polynomial, we wish to exclude sources from a network which do not appear as monomials. For the case of weakly reversible systems, the following theorem allows us to do this (see also Theorem 4.8 in \citep{craciun2018}).

\begin{theorem}\label{noaddedwkrev}
	If a polynomial dynamical system $\frac{d\mathbf{x}}{dt} = f(\mathbf{x})$ is weakly reversible, then it is generated by a weakly reversible network that has as its sources the exponent vectors of the monomials of $f$.
\end{theorem}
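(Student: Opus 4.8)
The plan is to start from the hypothesis. Since $\frac{d\mathbf{x}}{dt} = f(\mathbf{x})$ is weakly reversible, \cref{dyneq2} supplies a weakly reversible E-graph $\cG = (\cV,\cE)$ together with rate constants $\cK$ such that $\bm{f}_{\cG(\cK)} = f$. Writing the coefficient of the monomial $\mathbf{x}^{\bm{s}}$ as $\bm{w}_{\bm{s}} = \sum_{e:\,\bm{s}(e)=\bm{s}} k_e \bm{v}(e)$, the exponent vectors of the monomials of $f$ are exactly $M = \{\bm{s} \in \cS\cC_{\cG} : \bm{w}_{\bm{s}} \neq \bm{0}\}$. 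Because $\cG$ is weakly reversible every node has an outgoing edge, so $\cS\cC_{\cG} = \cV$, and the only nodes I must eliminate are the \emph{balanced} ones, $Z = \cV \setminus M$, at which $\bm{w}_{\bm{z}} = \bm{0}$. My goal is to delete the nodes of $Z$ one at a time, at each step keeping the network weakly reversible and keeping the generated function equal to $f$.

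The elimination step is the heart of the argument. Fix $\bm{z} \in Z$. The balance condition $\sum_{e:\,\bm{s}(e)=\bm{z}} k_e \bm{v}(e) = \bm{0}$ with all $k_e > 0$ rearranges to $\bm{z} = \sum_i \alpha_i \bm{t}_i$, a strict convex combination of the out-neighbors $\bm{t}_i$ of $\bm{z}$ (so $\bm{z}$ lies in the relative interior of their convex hull, and in particular $\bm{z}$ has at least two out-neighbors). I then remove $\bm{z}$ and every edge incident to it, and for each incoming edge $\bm{s} \to \bm{z}$ with rate $k_{\bm{s}\bm{z}}$ I install edges $\bm{s} \to \bm{t}_i$ with rates $k_{\bm{s}\bm{z}}\alpha_i$ (adding to any pre-existing rate, and discarding the degenerate term $\bm{t}_i = \bm{s}$, which contributes the zero vector). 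Using the identity $\bm{z} - \bm{s} = \sum_i \alpha_i(\bm{t}_i - \bm{s})$, one checks that the net reaction vector $\bm{w}_{\bm{s}}$ at every surviving node is unchanged; since $\bm{w}_{\bm{z}} = \bm{0}$ contributed no monomial, the rerouted network generates exactly $f$ again. This is the same mechanism as the ``splitting'' of \cref{splitting} and the sufficiency direction of \cref{containcondit}, but applied so as to preserve one fixed $f$ rather than merely an inclusion of dynamics. In particular $M$ and the balanced/unbalanced status of every surviving node are preserved, so no monomial-bearing source is ever destroyed.

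It remains to verify that weak reversibility survives each elimination, which I expect to be the main obstacle. Here I would argue at the level of reachability inside the strongly connected component of $\bm{z}$: any directed path that passed through $\bm{z}$ entered along some $\bm{s} \to \bm{z}$ and left along some $\bm{z} \to \bm{t}_i$, and the new edge $\bm{s} \to \bm{t}_i$ short-circuits exactly this transit, so every pair of surviving nodes that was mutually reachable remains so and each component stays strongly connected. (The degenerate $2$-cycle case $\bm{s} \leftrightarrow \bm{z}$ cannot strand a node, because a balanced $\bm{z}$ has at least two out-neighbors and hence its component has at least three nodes.) Iterating the elimination over the finitely many nodes of $Z$ terminates in a weakly reversible network $\cG'$ with node set exactly $M$ and rate constants $\cK'$ satisfying $\bm{f}_{\cG'(\cK')} = f$. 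Since $\cG'$ is weakly reversible, every node is a source, so $\cS\cC_{\cG'} = M$ is precisely the set of exponent vectors of the monomials of $f$, as required.
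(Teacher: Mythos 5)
Your proof is correct and is essentially the paper's own argument: you eliminate each balanced (zero-coefficient) source by deleting it and rerouting every incoming edge $\bm{s}\to\bm{z}$ to the out-neighbors $\bm{t}_i$ with rates $k_{\bm{s}\bm{z}}\alpha_i$, which is exactly the paper's choice $\tilde{k}_{e_i} = k^{\dagger}k_{e_i}/\sum_j k_{e_j}$, and you preserve weak reversibility by the same short-circuiting of two-edge transits through $\bm{z}$ (including the degenerate self-loop case the paper also notes). Your convex-combination framing of the balance condition and the explicit reachability argument are just cleaner packaging of the identical construction.
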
 

\begin{proof}
	Let $\cG = (\cV,\cE)$ be a weakly reversible network which generates $f(\mathbf{x})$. If there is a node $\b{s}^* = \bm{s}(e)$, $e \in \cE$ such that the monomial $\mathbf{x}^{\b{s}^*}$ does not appear in $f$, we first introduce a term $0\mathbf{x}^{\b{s}^*}$ in $f$. It is now convenient to order such nodes $\b{s}^*_1,\b{s}^*_2,...,\b{s}^*_k$. Consider first $\b{s}^*_1$. We index the edges $e\in \cE$ such that $\b{s}(e) = \b{s}^*_1$ as $e^1_1,...,e^1_p$. Because the coefficient of the monomial $\mathbf{x}^{\b{s}^*_1}$ in $f$ is $0$, the vectors $\b{v}(e)$ and rate constants $k_e$ must satisfy $ \sum_{i=1}^p  k_{e_i^1} \bm{v}(e_i^1) = \b{0}$. For any edge $e^{\dagger}$ which has $\bm{t}(e^{\dagger}) = \b{s}^*_1$, we have in $f$ a term of the form $k^{\dagger} \mathbf{x}^{\bm{s}(e^{\dagger})}\bm{v}(e^{\dagger})$. We let
	\[
	\tilde{k}_{e_i^1}= \frac{k_{e_i^1}}{ \sum_{j=1}^p k_{e_j^1}} k^{\dagger}
	\]
	so that $k^{\dagger} = \sum_{i=1}^p \tilde{k}_{e_i^1}$, 
	\[
	\sum_{i=1}^p \tilde{k}_{e_i^1} \bm{v}(e_i^1) = \frac{k^{\dagger}}{\sum_{i=1}^p k_{e_i^1}}\sum_{i=1}^p k_{e_i^1}\b{v}(e_i^1) = 0
	\]
	and finally
	\[
	k^{\dagger}\bm{v}(e^{\dagger}) = \sum_{i=1}^p \tilde{k}_{e_i^1} \bm{v}(e^{\dagger}).
	\]
	Then, we can write
	\[
	k^{\dagger}\bm{v}(e^{\dagger}) = \sum_{i=1}^p \tilde{k}_{e_i^1} \bm{v}(e^{\dagger})+ \sum_{i=1}^p \tilde{k}_{e_i^1} \bm{v}(e_i^1)= \sum_{i=1}^p\tilde{k}_{e_i^1}  (\bm{v}(e^{\dagger}) + \bm{v}(e_i^1)).
	\]
	Therefore, we can replace the term $k^{\dagger} \mathbf{x}^{\bm{s}(e^{\dagger})}\bm{v}(e^{\dagger})$ in $f$ with 
	\[
	  \sum_{i=1}^p\tilde{k}_{e_i^1}  (\bm{v}(e^{\dagger}) + \bm{v}(e_i^1))\mathbf{x}^{\bm{s}(e^{\dagger})}.
	\]
	This implies that $f$ is generated by a network which is built from $\cG$ in the following way:
	\begin{enumerate}
		\item All of the edges $e$ which have $\bm{s}(e) = \b{s}^*$ or $\bm{t}(e) = \b{s}^*$ are removed. Let $E_1$ be the edges with $\bm{s}(e) = \b{s}^*$ and let $E_2$ be the edges with $\bm{t}(e) = \b{s}^*$.
		\item Edges are added from each source node of an edge in $E_2$ to each target node of an edge in $E_1$.
	\end{enumerate}
	The resulting network is weakly reversible because $\cG$ was weakly reversible, and the only paths removed consisted of an edge in $E_2$ followed by an edge in $E_1$. These were then replaced with a single edge (or no edge if the path went from a node to itself). We now have a network which is weakly reversible and generates $f(\mathbf{x})$, but does not include $\b{s}_1^*$ as a source. We then simply repeat the argument for $\b{s}_2^*,...,\b{s}_k^*$ to eliminate all nodes that do not appear as non-zero terms in $f(\mathbf{x})$.
\end{proof}

The class of source-only networks is useful because they provide an upper bound on the networks we must consider when we attempt to represent polynomial dynamical systems using various kinds of networks. Furthermore, \cref{noaddedwkrev} shows that in some cases one need only consider networks without adding new nodes. Therefore, source-only representations of networks provide \emph{finite} descriptions of reversible, weakly reversible, and endotactic networks, which is useful in computations. For example, in \citep{craciun2018}, it is shown that to find a complex balanced realization of a polynomial system, one need only consider the complexes that appear as exponent vectors. Furthermore, knowing that one can write a network as a source-only network is important in dynamical equivalence and network translation-based computational methods. In these settings, it is often required to know the number and/or stoichiometry of required complexes \citep{J1,johnston2013computing}.

\subsection{Endotactic and Consistent Networks}

We continue with results related to endotactic networks.

\begin{lemma}\label{consis}
Every endotactic network is consistent.
\end{lemma}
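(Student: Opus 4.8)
The plan is to argue by contradiction and invoke Stiemke's Theorem (\cref{stiemke}), which is essentially the linear-algebraic dual of the definition of consistency. Observe that consistency of $\cG$, namely the existence of $a_1,\dots,a_{|\cE|} \in \bR_{>0}$ with $\sum_i a_i \bm{v}(e_i) = \b{0}$, is exactly alternative 1 of \cref{stiemke} applied to the family of reaction vectors $\{\bm{v}(e_i)\}_{i=1}^{|\cE|}$. So I would suppose for contradiction that $\cG$ is endotactic but \emph{not} consistent. Since alternative 1 fails, \cref{stiemke} forces alternative 2: there exists $\bm{w} \in \bR^d$ with $\bm{w} \cdot \bm{v}(e_i) \le 0$ for every $i$, and $\bm{w} \cdot \bm{v}(e_{i_0}) < 0$ for at least one index $i_0$.

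Next I would feed this $\bm{w}$ and the edge $e_{i_0}$ into the endotactic condition. Because $\bm{w} \cdot \bm{v}(e_{i_0}) < 0$, endotacticity guarantees an edge $e_j \in \cE$ satisfying both $\bm{w} \cdot (\bm{s}(e_j) - \bm{s}(e_{i_0})) < 0$ and, crucially, $\bm{w} \cdot \bm{v}(e_j) > 0$. But alternative 2 of \cref{stiemke} asserted $\bm{w} \cdot \bm{v}(e_i) \le 0$ for \emph{all} edges, in particular $\bm{w} \cdot \bm{v}(e_j) \le 0$. This is the desired contradiction, so $\cG$ must be consistent after all. Note that only the sign condition $\bm{w} \cdot \bm{v}(e_j) > 0$ from the definition of endotactic is needed; the source-separation condition $\bm{w} \cdot (\bm{s}(e_j) - \bm{s}(e_{i_0})) < 0$ plays no role in this argument.

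I do not anticipate a genuine obstacle here: the entire content is packaged in recognizing that ``not consistent'' is precisely the Stiemke alternative producing a vector $\bm{w}$ along which every reaction vector is nonpositively oriented and at least one is strictly negatively oriented, which directly collides with the defining requirement of endotacticity that a strictly-negatively-oriented reaction forces the existence of a strictly-positively-oriented one. The only point requiring a sentence of care is the bookkeeping that the family of vectors to which \cref{stiemke} is applied is exactly the enumerated list $\bm{v}(e_1),\dots,\bm{v}(e_{|\cE|})$ used in the definition of consistency, so that alternatives 1 and 2 line up verbatim with ``consistent'' and ``the endotactic-violating witness,'' respectively. The proof is therefore short and self-contained.
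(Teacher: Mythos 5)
Your proof is correct and follows essentially the same route as the paper: assume the network is endotactic but not consistent, invoke Stiemke's Theorem (\cref{stiemke}) to obtain a vector $\bm{w}$ with $\bm{w}\cdot\bm{v}(e_i)\le 0$ for all edges and strict for some $e_{i_0}$, and derive a contradiction with the endotactic condition. You merely make explicit the final step that the paper compresses into ``it follows immediately from the definition of endotactic,'' namely that endotacticity forces some edge $e_j$ with $\bm{w}\cdot\bm{v}(e_j)>0$, which the Stiemke alternative forbids.
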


\begin{proof}
Suppose, in order to find a contradiction, that there is a network $(\cV,\cE)$ that is endotactic but not consistent. Since the network is not consistent, there does not exist a set of  constants $\lambda_e >  0$ for which 
\[
\sum_{e\in \cE} \lambda_e \bm{v}(e) = \mathbf{0}.
\]
It follows that condition 1. of \cref{stiemke} is not  satisfied,    so that condition 2. must be satisfied. That is, there is a $\b{w} \in \mathbb{R}^d$   such that $\b{w} \cdot \bm{v}(e) \leq 0$, with at least one inequality strict. It follows immediately from the definition of endotactic in  \cref{classificationsEG} that the network is not endotactic, which is a contradiction. It follows that every endotactic network is consistent.
\end{proof}

\subsection{Endotactic and Weakly Reversible Networks}

We have seen that every endotactic network may be represented in a dynamically equivalent form as a source-only network. Since weakly reversible networks are source-only by definition, it is tempting to suppose that every endotactic network is effectively weakly reversible. However, this is not the case, as we will show. In this section, we introduce the concept of \emph{extremal reactions} which helps to bridge the gap between endotactic and weakly reversible networks.

Considering \cref{example1} again, we see that this network is endotactic and effectively source-only, but that it is not weakly reversible. In order to make it weakly reversible, we must be able to reconfigure the other reactions so that $X_1 + X_2$ is also the product of some reaction. We observe, however, that we cannot ``split'' any of the other three reactions as they lie on the outer hull of the source complexes. For instance, to split $3X_1 \to 3X_2$ to connect to $X_1 + X_2$, we must necessarily introduce a balancing reaction which points away from the convex hull of the source complexes, and therefore introduces a strictly product complex. This network, therefore, is not effectively weakly reversible.

We can quickly identify that the reason there is no weakly reversible dynamically equivalent network is that there is a complex in the interior of the convex hull of the complexes which cannot be reached. In this example, however, we might observe that the restriction of the network to just the boundary complexes $3X_1$, $3X_2$, and $\emptyset$ \emph{is} weakly reversible. This is perhaps not surprising; after all, we observed that the reactions which could not be ``split'' were exactly those which were on the boundary of the convex hull. We therefore introduce the following.

\begin{definition}
	Consider a chemical reaction network $\cG = (\cV,\cE)$.
	We define the \textbf{extreme source complexes} $\cE\cC_{\cG} \subseteq \cS\cC_{\cG} $ to be  the set of source nodes $\bm{s}(e)$ which are on the border of the convex hull of $\cS\cC_{\cG}$. The \textbf{extremal reaction set} $\cE\cE_{\cG}$ is defined to be the set $\{e\in \cE| \bm{s}(e) \in \cE\cC_{\cG} \}$, and we define $\cE\cV_{\cG}$ to be the subset of $\cV$ that are the sources and/or targets of the edges in $\cE\cE_{\cG}$.
	Then the network  $\cG$ is said to be \textbf{extremally weakly reversible} if the reduced network $(\cE\cV_{\cG},\cE\cE_{\cG} )$ is weakly reversible. 
	
	A mass action system is said to be \textbf{extremally weakly reversible} if it is generated by an extremally weakly reversible E-graph. A chemical reaction network $\cG$ is said to be \textbf{effectively extremally weakly reversible} if any system generated by $\cG$ is extremally weakly reversible. 
\end{definition}

It is clear that for \cref{example1} we have that $\mathcal{E}\cC_{\cG} = \cE\cV_{\cG} = \{ 3X_1, 3X_2, \emptyset \}$ and that $(\mathcal{E}\cV_{\cG} ,\cE\cE_{\cG})$ is  weakly reversible. Hence, the original network is extremally weakly reversible.    

 We wish to determine how robust this property is among endotactic networks. Consider the following example.

\begin{example}
\label{example2}
Consider the network
\begin{equation}
\label{ex3}
\xymatrix@R=5mm@C=7mm{
	X_2 \ar[r]^{k_1} & X_1 + 3X_2 \ar@<0.5ex>[r]^{k_2} & \ar@<0.5ex>[l]^{k_3} 2X_1 + 3X_2\\
	3X_1 + 2X_2 \ar[r]^{k_4} &2X_1 \ar@<0.5ex>[r]^{k_5} & \ar@<0.5ex>[l]^{k_6} X_1
}
\end{equation}
The network can also be represented by the E-graph in \cref{figure2}. It can be visually checked that the network is endotactic. Furthermore, we can see that every complex is an extremal complex, so that $\mathcal{E} \cC_{\cG}= \cS\mathcal{C}_{\cG}$, and $\cE\cE_{\cG} = \cE$. Nevertheless, no reaction may be ``split'' while preserving the property that the network is source-only. It follows that the network is neither effectively weakly reversible nor effectively extremally weakly reversible.  \hfill $\triangle$ 
\begin{figure}[h]
\centering
\includegraphics[scale = 1]{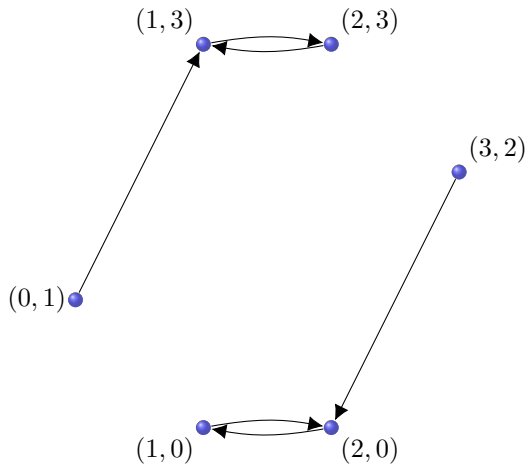}
\caption{E-graph defining the network \cref{ex3}. Although the network is source-only and endotactic, it does not permit a strongly endotactic, weakly reversible, or extremally weakly reversible representation.}
\label{figure2}
\end{figure}
\end{example}

While the network in \cref{example2} is endotactic, it is not strongly endotactic. We now consider whether the property of being effectively extremally weakly reversible holds for strongly endotactic networks. Consider the following example.

\begin{example}
\label{example3}
Consider the network given in the three-dimensional complex space by \cref{figure3}. It can be verified by visual inspection that the network is strongly endotactic. We also have that  $\cE\cC_{\cG} = \cS\cE_{\cG}$; however we again may not ``split'' any reaction from these complexes while maintaining the property of being source-only. It follows that the network is not effectively weakly reversible.  \hfill $\triangle$
\begin{figure}[h]
\centering
\includegraphics[scale = 1]{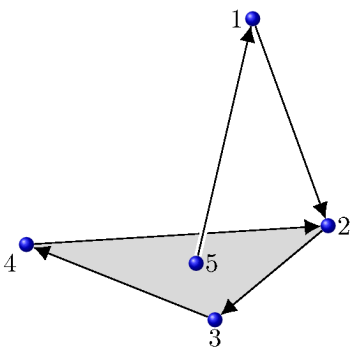}
\caption{Three-dimensional E-graph defining a network. Although the network is source-only and strongly endotactic, it does not permit a weakly reversible or extremally weakly reversible representation.}
\label{figure3}
\end{figure}
\end{example}

This example is three-dimensional. The following result considers  strongly endotactic networks which have a two-dimensional stoichiometric subspace. 

\begin{theorem}\label{wkrev}
	Let  $\cG = (\cV,\cE)$ be a strongly endotactic two-dimensional network with two-dimensional stoichiometric subspace and assume that the source complexes only reside  on the boundary of the convex hull generated from the source complexes.  Then there exists a weakly reversible Euclidean embedded graph $\tilde{\cG}$ such that $\cG \sqsubseteq \tilde{\cG}$. Therefore, every two dimensional strongly endotactic network is effectively extremally weakly reversible.
\end{theorem}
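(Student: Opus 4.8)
The plan is to construct the weakly reversible witness $\tilde\cG$ explicitly on the node set $\cS\cC_\cG$ and then invoke \cref{containcondit}. Since $\cG$ is strongly endotactic it is in particular endotactic, so the Farkas argument used in the proof of \cref{sourceonly} applies verbatim: for each source $\bm s$ the reaction cone $V^\cG(\bm s)$ is contained in the tangent cone $T(\bm s) := \mathit{Cone}(\{\bm s' - \bm s : \bm s'\in\cS\cC_\cG\})$ of the polygon $P := \mathit{conv}(\cS\cC_\cG)$ at $\bm s$, computed in the two-dimensional plane containing the source complexes. Because every source lies on $\partial P$, each $T(\bm s)$ is a convex cone in that plane with at most two extreme rays (a pointed cone at a vertex of $P$, a half-plane at an interior point of an edge). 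I would then observe that to satisfy condition (ii) of \cref{containcondit} it suffices to build, for each $\bm s$, an outgoing edge set whose cone $V^{\tilde\cG}(\bm s)$ (generated by directions toward other sources) satisfies $\mathit{RelInt}(V^\cG(\bm s))\subseteq \mathit{RelInt}(V^{\tilde\cG}(\bm s))$; once this holds with $\cS\cC_{\tilde\cG}=\cS\cC_\cG$, condition (i) is automatic, so $\cG\sqsubseteq\tilde\cG$, and since all sources are extreme the resulting $\tilde\cG$ witnesses that $\cG$ is effectively extremally weakly reversible.

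Next I would classify the sources by the position of $V^\cG(\bm s)$ inside $T(\bm s)$. If $V^\cG(\bm s)$ is two-dimensional, or is a ray in the relative interior of $T(\bm s)$, then I can emit edges toward a set of sources whose directions generate a cone containing $V^\cG(\bm s)$ with the correct relative-interior nesting, and I may freely add further ``chord'' edges toward \emph{any} other source without destroying this (every such direction lies in $T(\bm s)$, and enlarging a cone of the same dimension, or a full-dimensional cone around an interior ray, preserves the containment of relative interiors). Call these sources \emph{flexible}. The only rigid case is when $V^\cG(\bm s)$ is a single ray lying along an extreme ray of $T(\bm s)$, i.e. directed along an edge of $P$; then any extra chord would make $V^{\tilde\cG}(\bm s)$ two-dimensional with that ray on its boundary and the relative-interior condition would fail, so such a source is forced to emit only along that polygon edge. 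Thus all forced edges run along $\partial P$, while flexible sources may point essentially anywhere.

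The construction then reduces to completing the forced boundary edges to a weakly reversible digraph using the freedom at flexible sources, and this is the heart of the argument. If no forced edge points ``backward'' (or none points ``forward'') around $\partial P$, the full oriented boundary cycle is present and $\tilde\cG$ is weakly reversible outright. Otherwise, following forced edges from any source terminates either at a flexible source or in a forced two-cycle; I would take the flexible sources to be mutually strongly connected and route chords from them back to the starting points of the forced chains, so that every edge lies on a directed cycle. The delicate point, and the place where strong endotacticity (rather than mere endotacticity) is indispensable, is excluding the trap configurations in which an edge cannot be completed to a cycle --- for example a forced two-cycle $\bm s_i\leftrightarrow\bm s_{i+1}$ along an edge of $P$ whose neighbour is compelled to emit into it. Here I would apply the strong endotactic condition with $\bm w$ the inward normal of that edge: the edge is the $\bm w$-minimal face, the forced reactions on it satisfy $\bm w\cdot\bm v=0$, and the contrapositive of strong endotacticity then forces \emph{every} reaction of $\cG$ to satisfy $\bm w\cdot\bm v\ge 0$, contradicting the presence in $V^\cG$ of the offending inward-pointing reaction that would otherwise compel the bad edge. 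In the extreme case where every source is rigid, the same normal-direction test shows the forced orientation around $\partial P$ must be globally constant or strictly alternating, each of which is already weakly reversible (a single Hamiltonian boundary cycle, respectively a disjoint union of two-cycles).

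I expect the main obstacle to be exactly this completion step: verifying that strong endotacticity excludes every obstruction to weak reversibility, uniformly over the arrangements of flexible and rigid sources around $\partial P$. This is precisely where the hypotheses are sharp --- \cref{example2} (endotactic but not strongly endotactic) and the three-dimensional \cref{example3} both fail to admit such a completion --- so the argument must genuinely use two-dimensionality (which makes each $T(\bm s)$ a planar cone with only two boundary rays and hence yields the clean forced/flexible dichotomy) together with the global reach of the strong endotactic minimal-face condition. The remaining bookkeeping, namely checking the relative-interior cone containments and that $\cS\cC_{\tilde\cG}=\cS\cC_\cG$, is then routine via \cref{containcondit}.
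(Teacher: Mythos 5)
Your construction follows the same route as the paper's proof: build the witness on the node set $\cS\cC_{\cG}$, use the Farkas/endotacticity argument to place each $V^{\cG}(\b{s})$ inside the tangent cone of the hull, split the sources into those forced to emit along a face of the hull and those with freedom, and then invoke strong endotacticity with inward normals of hull faces to complete the forced boundary edges to a weakly reversible graph via \cref{containcondit}; this is exactly the paper's $\cG_1$/$\cG_2$/$\tilde{\cG}$ scheme, and your cone bookkeeping and rigid/flexible dichotomy are correct. The problem is that the completion step --- which you yourself flag as the heart of the argument --- is where the paper spends essentially all of its effort (its structural facts (i)--(iii) about $\cG_2$ and the explicit path-routing), and your sketch of that step has a concrete hole.

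Your trap exclusion only rules out a bad face $F$ when the edge entering it is compelled by ``the offending inward-pointing reaction,'' i.e.\ by a reaction of $\cG$ with $\b{w}\cdot\b{v}<0$, $\b{w}$ the inward normal of $F$. But an entering edge can be compelled with no such reaction present: a flexible source $\b{s}$ off $F$ whose cone $V^{\cG}(\b{s})$ has an extreme ray parallel to $F$, with no source of $\cS\cC_{\cG}$ lying in exactly that direction from $\b{s}$, is forced to chord to a source at strictly smaller $\b{w}$-height in order to cover that ray, and those sources may all lie on $F$ (all of $\b{s}$'s reactions satisfying $\b{w}\cdot\b{v}\ge 0$, so your minimal-face argument yields no contradiction). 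Closing this case needs an ingredient you never invoke: if every source on $F$ emits only along $F$, strong endotacticity pushes every reaction of the network into the closed half-plane $\b{w}\cdot\b{v}\ge 0$, and then consistency (\cref{consis}, via \cref{stiemke}) forces every reaction to be parallel to $F$, contradicting the \emph{two-dimensional stoichiometric subspace} hypothesis. It is telling that your proposal uses only ambient two-dimensionality and never that hypothesis, yet it is precisely what makes all-rigid faces impossible; indeed your ``strictly alternating'' all-rigid configurations (disjoint two-cycles on distinct faces) are never strongly endotactic when the stoichiometric subspace is two-dimensional, so that part of your classification --- which is asserted, not proved --- describes networks already excluded by the hypotheses. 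Relatedly, forced cycles need not be two-cycles (chains can wind through several corners of $P$), so the claimed dichotomy ``flexible terminus or forced two-cycle'' also needs an argument. The skeleton matches the paper; the case analysis that makes the completion work is missing.
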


\begin{figure}
\centering
\includegraphics[scale = 0.6]{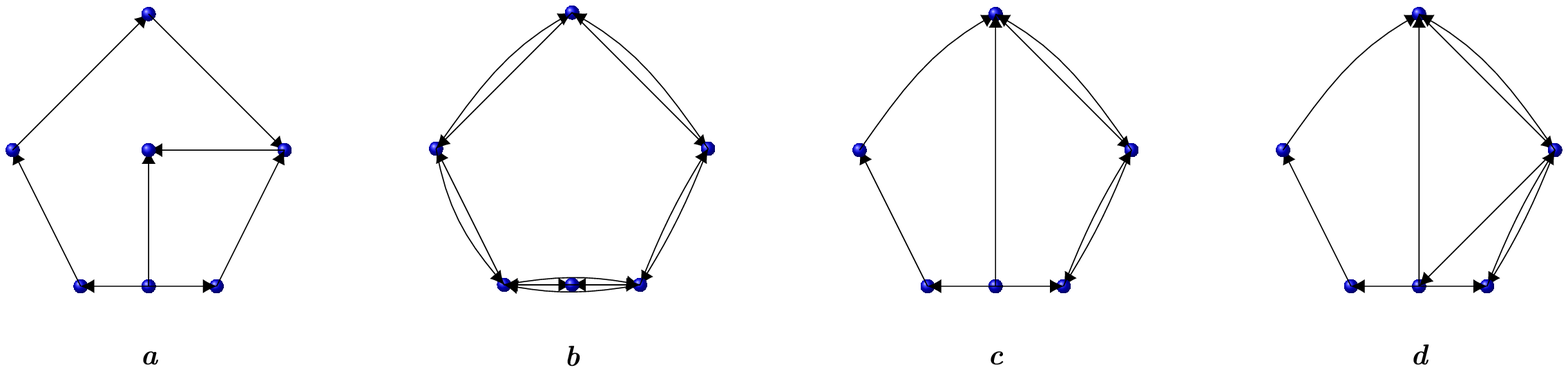}
	\caption{The networks constructed as outlined by the proof of \cref{wkrev}.  (a) The initial E-graph $\cG$. (b) $\cG_1$, which includes all of the possible edges that lie along a face of the convex hull of $\cS\cC_{\cG}$. (c) $\cG_2$, which is strongly endotactic and has $\cG\sqsubseteq \cG_2$. (d) $\tilde{\cG}$, which is strongly endotactic, weakly revesible, and has $\cG \sqsubseteq \tilde{\cG}$.}
\end{figure}

\begin{proof}
Let $\mathcal{G} = (\cV,\cE)$ be a strongly endotactic network with sources only on the convex hull of source complexes. We will build $\tilde{\cG}$ in three stages, constructing ``intermediate" networks $\cG_1$ and $\cG_2$, and finally $\tilde{\cG}$.

Consider the Euclidean embedded graph $\cG_1=(\cV_1,\cE_1)$ with $\cV_1= \cS\cC_{\cG}$, and as its edges $\cE_1$ all possible edges which lie along the sides of the convex hull of $\cS\cC_{\cG}$. $\cG_1$ is clearly weakly reversible, and while $\cS\cC_{\cG_1} = \cS\cC_{\cG}$, $\cG_1$ does not necessarily contain the dynamics of $\cG$. 

We now prove, however, that for each $\b{s}\in \cS\cC_{\cG}$, we have that either $V^{\cG}(\bm{s}_i) \subseteq V^{\cG_1}(\bm{s}_i)$ or $V^{\cG_1}(\bm{s}_i)$ is the border of a half space which contains $V^{\cG}(\bm{s}_i)$. Let $\cW_i$ be the set such that for $\b{w} \in \cW_i$, $\b{w} \cdot (\b{s}_i-\b{s}^*) \leq 0$ for any $\b{s}^* \in \cS\cC_{\cG}$. This set contains some non-zero vector because $\b{s}_i$ is in the convex hull of $\cS\cC_{\cG}$. Let $\b{w}\in \cW$ and $e\in \cE$ be such that $\b{s}(e) = \b{s}_i$. Because $\cG$ is endotactic and $\b{w} \cdot (\b{s}_i-\b{s}^*) \leq 0$, it must be that $\b{v}(e) \cdot \b{w} \geq 0$. If $V^{\cG_1}(\bm{s}_i)$ is pointed, then $\cW_i$ is the dual cone to $V^{\cG_1}(\bm{s}_i)$ and by \cref{lemma01} we can conclude that $V^{\cG}(\bm{s}_i) \subseteq V^{\cG_1}(\bm{s}_i)$. If $V^{\cG_1}(\bm{s}_i)$ is a line (note that by construction $V^{\cG_1}(\bm{s}_i)$ cannot be a ray), then $\cW_i$ is perpendicular to $V^{\cG_1}(\bm{s}_i)$. Then, we can conclude that $V^{\cG_1}(\bm{s}_i)$ is the border of a half space which contains $V^{\cG}(\bm{s}_i)$, and that this half space also contains $\cS\cC_{\cG}$, because $\b{w} \cdot (\b{s}-\b{s}^*) \leq 0$ for any $\b{s}^* \in \cS\cC_{\cG}$.

\medskip

We next construct $\cG_2 = (\cV_2,\cE_2)$ from $\cG_1$ such that $\cG_2$ contains the dynamics of $\cG$ by modifying the edge set $\cE_1$. We index the set $\cS\cC_{\cG}$ of source complexes as $\b{s}_1,...,\b{s}_{|\cS\cC_{\cG}|}$ and build $\cE_2$ by adding edges $e$ with $\b{s}(e) = \b{s}_i$ for $i=1,...,|\cS\cC_{\cG}|$. For each source $\bm{s}_i$ of $\cG_1$, there are three possibilities we must consider:
\begin{enumerate}[{\bf(a)}]

\item If $\mathit{RelInt}(V^{\cG}(\bm{s}_i)) \subseteq \mathit{RelInt}(V^{\cG_1}(\bm{s}_i))$, we simply include the edges $e$ of $\cG_1$ with $\b{s}(e) = \b{s}_i$ in $\cE_2$. Then, clearly $\mathit{RelInt}(V^{\cG}(\bm{s}_i)) \subseteq \mathit{RelInt}(V^{\cG_2}(\bm{s}_i))$. 

\item If $V^{\cG_1}(\bm{s}_i)$ is a line and $V^{\cG}(\bm{s}_i)$ is not contained in that line, we again include each edge $e$ of $\cG_1$ with $\b{s}(e) = \b{s}_i$ in $\cE_2$, but must also include additional edges. We know that, in this case, $V^{\cG_1}(\bm{s}_i)$ is the border of a half space which contains $V^{\cG}(\bm{s}_i)$ and $\cS\cC_{\cG}$. We add an edge with source $\b{s}_i$ and a target in $\cV_1= \cS\cC_{\cG}$ that does not lie in the line $V^{\cG_1}(\bm{s}_i)$. Thus, $V^{\cG_2}(\bm{s}_i)$ is the appropriate half space. Then, $\mathit{RelInt}(V^{\cG}(\b{s}_i)) \subseteq \mathit{RelInt}(V^{\cG_2}(\b{s}_i))$.

\item If $V^{\cG}(\bm{s}_i) \subset \partial V^{\cG_1}(\bm{s}_i)$, then $V^{\cG}(\bm{s}_i)$ is a ray along one face of the convex hull of $\cS\cC_{\cG}$. We then take only the edges of $\cG_1$ which lie along this ray to be edges in $\cE_2$. Then, clearly $\mathit{RelInt}(V^{\cG}(\bm{s}_i)) \subseteq \mathit{RelInt}(V^{\cG_2}(\bm{s}_i))$ because $V^{\cG}(\bm{s}_i) = V^{\cG_2}(\bm{s}_i)$. 
\end{enumerate}

We have now constructed $\cG_2 = (\cV_2,\cE_2)$ such that $\cG \sqsubseteq \cG_2$. Furthermore, $\cG_2$ is strongly endotactic, as we now show. Let $\b{w} \in \bR^d$ and $e \in \cE_2$ be such that $\b{w}\cdot \b{v}(e) < 0$. Let $\{\b{s}^*\}\subset \cE_2$ be the sources such that $\b{w}\cdot (\b{s}^* - \b{s}(e_j)) \leq 0$ for all $e_j \in \cE_2$, and $\{e^*\}$ the corresponding edges. Any edges in $\cG_2$ correspond to reaction vectors which do not point out of the convex hull of $\cS\cC_{\cG}$, so we know that $\b{w}\cdot(\b{s}^* - \b{s}(e)) <0$. Also, $\{\b{s}^*\}$ is the same as the set of sources of $\cG$ which have $\b{w}\cdot (\b{s}(e_j) - \b{s}^*) \leq 0$ for all $e_j \in \cE$. If none of the $\b{v}(e^*)\cdot \b{w} >0$, then our construction implies this is true of the edges of $\cG$ as well. This contradicts the assumption that $\cG$ is strongly endotactic. We conclude that $\cG_2$ is strongly endotactic.

\medskip

It is possible that $\cG_2$ is not weakly reversible, so we finally construct $\tilde{\cG}$ such that $\tilde{\cV} = \cV_2$, $\cE_2 \subseteq \tilde{\cE}$, and both $\tilde{\cG} \sqsubseteq \cG_2$ and $\cG_2 \sqsubseteq \tilde{\cG}$ hold. To complete the construction, we must first establish the following about the structure of $\cG_2$:
\begin{enumerate}[\bf (i)]
\item For each $\b{s}\in \cS\cC_{\cG}$, either $V^{\cG_2}(\b{s})$ is one dimensional and intersects a (one dimensional) face of the convex hull of $\cS\cC_{\cG}$, or $V^{\cG_2}(\b{s})$ is solid (meaning it has two-dimensional span) and $(\b{s}_i-\b{s}) \in V^{\cG_2}(\b{s})$ for all $\b{s}_i \in \cS\cC_{\cG}$.
\item On every face of the convex hull of $\cS\cC_{\cG}$, at least one of the following is true: there is some $\b{s}$ such that $(\b{s}_i-\b{s}) \in V^{\cG_2}(\b{s})$ for all $\b{s}_i \in \cS\cC_{\cG}$ (and $V^{\cG_2}(\b{s})$ is solid), or there is some $\b{s}$ such that $\b{s}$ is a corner of the convex hull of $\cS\cC_{\cG}$ and $V^{\cG_2}(\b{s})$ is a ray pointing along an adjacent face of the convex hull of $\cS\cC_{\cG}$. 
\item We may add a path from any source on some face of the convex hull of $\cS\cC_{\cG}$ to a source as in (ii) to create a network $\cG^*$ such that $\cG^*\sqsubseteq \cG_2$ and $\cG_2 \sqsubseteq \cG^*$.
\end{enumerate}

To establish (i), suppose that we have some $\b{s}$ and $V^{\cG_2}(\b{s})$ is not one dimensional. Then, either $V^{\cG_2}(\b{s}) = V^{\cG_1}(\b{s})$ or $V^{\cG_2}(\b{s})$ is a half space such that $V^{\cG_1}(\b{s}) = \partial V^{\cG_2}(\b{s})$. In either case, the convexity of the convex hull of $\cS\cC_{\cG}$ implies (i). By our construction possibility (c), if $V^{\cG_2}(\b{s})$ is one dimensional, it must intersect a (one dimensional) face of the convex hull of $\cS\cC_{\cG}$.

To establish (ii), let $\b{w}$ be such there is some set $S \subset \cS\cC_{\cG}$ with at least two distinct elements and $\b{w} \cdot (\b{s}_i-\b{s}_j) = 0$ for $\b{s}_i,\b{s}_j \in S$ and $\b{w} \cdot (\b{s}_i-\b{s}_k) < 0$ for $\b{s}_i \in S$, $\b{s}_k \not\in S$ (i.e., $\b{w}$ is the inward pointing normal to a one dimensional face of the convex hull of $\cS\cC_{\cG}$). $\cG_2$ is strongly endotactic, so for some $\b{s}\in S$, there is some $e\in \cE_2$ with $\b{s}(e) = \b{s}$ and $\b{v}(e) \cdot \b{w} >0$. We can conclude using fact (a) that on every face of the convex hull of $\cS\cC_{\cG}$, at least one of the following is true: there is some $\b{s}$ such that $(\b{s}_i-\b{s}) \in V^{\cG_2}(\b{s})$ for all $\b{s}_i \in \cS\cC_{\cG}$ (and $V^{\cG_2}(\b{s})$ is solid), or there is some $\b{s}$ such that $\b{s}$ is a corner of the convex hull of $\cS\cC_{\cG}$ and $V^{\cG_2}(\b{s})$ is a ray pointing along an adjacent face of the convex hull of $\cS\cC_{\cG}$. 

To establish (iii), let $\cS$ be a face of the convex hull of $\cS\cC_{\cG}$, and let $\b{s} \in \cS$ be a source such that either $(\b{s}_i-\b{s}) \in V^{\cG_2}(\b{s})$ for all $\b{s}_i \in \cS\cC$ or $V^{\cG_2}(\b{s})$ is a ray pointing along an adjacent face of the convex hull of $\cS\cC_{\cG}$. Let $\b{s}^* \in \cS$ be some other source on the same face of the convex hull of $\cS\cC_{\cG}$. If $V^{\cG_2}(\b{s}^*)$ is solid or a full line, then $\b{s}-\b{s}^*\in V^{\cG_2}(\b{s}^*)$, and so if $\cG^*$ is the network with an edge added from $\b{s}^*$ to $\b{s}$, then $V^{\cG^*}(\b{s}^*) = V^{\cG_2}(\b{s}^*)$. If $V^{\cG_2}(\b{s}^*)$ is a ray and $\b{s}-\b{s}^* \not \in V^{\cG_2}(\b{s}^*)$, there must be some $\b{s}^{**}$ such that $(\b{s}^{**}-\b{s}^*) \in V^{\cG_2}(\b{s}^*)$ and either $\b{s}^{**}$ also has either $(\b{s}_i-\b{s}^{**}) \in V^{\cG_2}(\b{s}^{**})$ for all $\b{s}_i \in \cS\cC$ or $V^{\cG_2}(\b{s}^{**})$ is a ray pointing along an adjacent face of the convex hull of $\cS\cC_{\cG}$, or $V^{\cG_2}(\b{s}^{**}) = - V^{\cG_2}(\b{s}^{*})$. In the last case, $(\b{s}-\b{s}^{**})\in V^{\cG_2}(\b{s}^{**})$, and so if $\cG^*$ is the network to which we added edges to form a path from $\b{s}^*$ to $\b{s}$ through $\b{s}^{**}$, we have that $V^{\cG^*}(\b{s}^*) = V^{\cG_2}(\b{s}^*)$ and $V^{\cG^*}(\b{s}^{**}) = V^{\cG_2}(\b{s}^{**})$ . 

The above arguments show that for any source $\b{s}^* \in \cS\cC_{\cG}$, there is a source $\b{s}$ in the same face of the convex hull of $\cS\cC_{\cG}$ such that either $(\b{s}_i-\b{s}) \in V^{\cG_2}(\b{s})$ for all $\b{s}_i \in \cS\cC$ or $V^{\cG_2}(\b{s})$ is a ray pointing along an adjacent face of the convex hull of $\cS\cC_{\cG}$, and furthermore that if $\cG^*$ is the network to which we added edges to form a path from $\b{s}^*$ to $\b{s}$, then $\cG^*\sqsubseteq \cG_2$ and $\cG_2 \sqsubseteq \cG^*$.

We may now complete the construction of $\tilde{\cG}$. Recalling that $\cG_1$ is weakly reversible, $\tilde{\cG}$ is weakly reversible if $\tilde{\cE}$ includes edges which replace any paths present in $\cG_1$ that were not included in $\cG_2$. Let $e_j$ be any edge in $\cE_1$ but not in $\cE_2$. Note that $\b{t}(e_j) = \b{s}(e_k)$ for some $e_k \in \tilde{\cE}$ (because $\tilde{\cV} = \cV_2=\cV_1 = \cS\cC_{\cG}$). We must add a path of edges in $\tilde{\cE}$ from $\b{s}(e_j)$ to $\b{s}(e_k)$, or prove that such a path is already present in $\tilde{\cE}$. Let $\b{s}(e_j) = \b{s}$. We have seen that we may add a path of edges to some source $\b{s}^*$ in the same of face of the convex hull of $\cS\cC_{\cG}$ as $\b{s}$ such that one of the following is true:
\begin{enumerate}[\bf(a)]
\item $(\b{s}_i-\b{s}^*) \in V^{\cG_2}(\b{s}^*)$ for all $\b{s}_i \in \cS\cC_{\cG}$, or
\item $\b{s}^*$ is is a corner of the convex hull of $\cS\cC_{\cG}$ and $V^{\cG_2}(\b{s}^*)$ is a ray pointing along an adjacent face of the convex hull of $\cS\cC_{\cG}$.
\end{enumerate}
If (a) holds, then we can add an edge to $\tilde{\cE}$ with source $\b{s}^*$ and target $\b{s}(e_k)$ and still have both $\tilde{\cG} \sqsubseteq \cG_2$ and $\cG_2 \sqsubseteq \tilde{\cG}$ , and a path from $\b{s}(e_j)$ to $\b{s}(e_k)$. 

If (b) holds but (a) does not, we repeat the argument on the adjacent face of the convex hull of sources for $\b{s}^*$, letting $\b{s}^{**}$ be the new source which satisfies one of (a) or (b). Note that $ V^{\cG_2}(\b{s}^*)$ is not solid, so $\b{s}^{**} \neq \b{s}^*$ (otherwise (a) was originally satisfied). If again only (b) holds, we may continue the argument until (a) holds for some $\b{s}$ in a face of the convex hull of $\cS\cC_{\cG}$, or (b) holds for some $\b{s}$ and $\b{s}(e_k)$ is in the face of the convex hull that $V^{\cG_2}(\b{s})$ points along.

We conclude that if $\tilde{\cG}$ is the weakly reversible network with paths added to replace any edges in $\cE_1$ that are missing from $\cE_2$, then $\tilde{\cG} \sqsubseteq \cG_2$. Then, \cref{effectively_endo_endo} implies that $\tilde{\cG}$ is strongly endotactic. Furthermore, $\cG\sqsubseteq \cG_2\sqsubseteq \tilde{\cG}$.
\end{proof}

\subsection{Additional Examples}

We now present counterexamples to various possible inclusions of network types in the sense of dynamical equivalence. This will allow us to conclude that any arrow added to our \cref{relationships}  would be false.
\begin{figure}[b!]
\centering
\includegraphics[scale = 0.7]{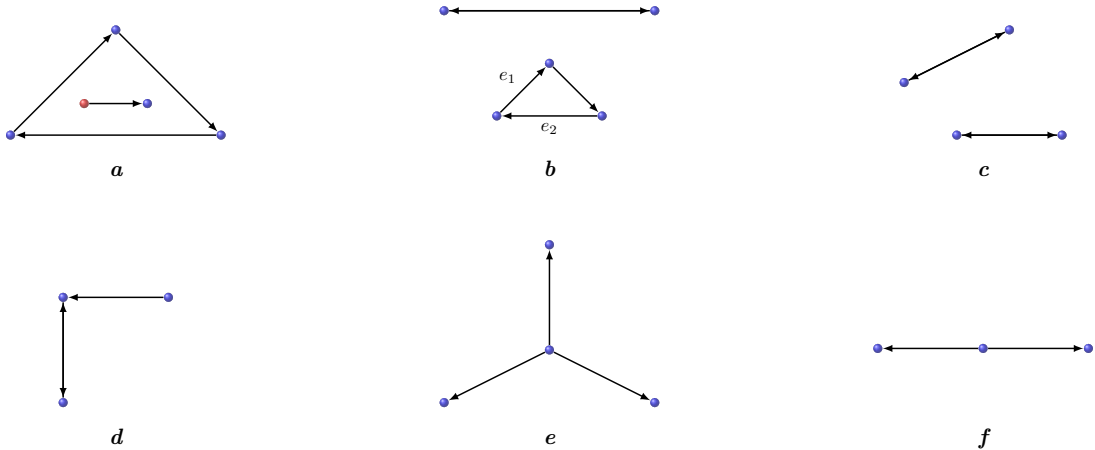}
\caption{Collection of examples. All examples are 2-dimensional networks, with the exception of (f), which is \cref{CnotSO}, that is 1-dimensional. (a) \cref{EWRnotWR}: Extremally weakly reversible but not effectively weakly reversible.(b) \cref{WRnotEWR}: Weakly reversible but not effectively extremally weakly reversible. (c) \cref{EWRnotSE}: Reversible but not effectively strongly endotactic. (d)\cref{SOnotE}: Source only but not effectively endotactic. (e) \cref{CnotE}: Consistent but not effectively endotactic. (f) \cref{CnotSO}: Consistent but not effectively source-only.
}
\label{counters}
\end{figure}

\begin{example}\label{EWRnotWR}
	Consider the E-graph shown in \cref{counters} (a). This network is extremally weakly reversible, but it is not effectively weakly reversible. According to \cref{noaddedwkrev}, if there is a weakly reversible network which generates a system generated by this network, it needs no added sources. Therefore, there must be a path from at least one extremal source to the interior source (shown in red). However, splitting (as in \cref{splitting}) any extremal reaction will result in a new reaction which points out of the convex hull of sources. The resulting network cannot be endotactic, and so is not weakly reversible. \hfill $\triangle$
\end{example}
\begin{example}\label{WRnotEWR}
	Consider the E-graph shown in \cref{counters} (b). This network is weakly reversible, while it is not effectively extremally weakly reversible. The extremal reaction set consists of two irreversible reactions which form a path (labeled $e_1$ and $e_2$) and one reversible reaction pair. Again, \cref{noaddedwkrev} implies that if there is a weakly reversible network generates a system generated by the extremal reaction set then it needs no added sources. While this does allow us to reverse edge $e_2$ in \cref{counters} (b) by splitting (as in \cref{splitting}) edge $e_1$, the result is a new irreversible path into the reversible reaction pair. Neither reversible reaction can be split without introducing a new reaction which points out of the convex hull of sources.  Thus, there is no weakly reversible network which contains the dynamics of this extremal reaction set. We conclude that the network is not effectively extremally weakly reversible. \hfill $\triangle$
\end{example}
\begin{example}\label{EWRnotSE}
	Consider the E-graph shown in \cref{counters} (c). This network is reversible, weakly reversible, and extremally weakly reversible, while it is not effectively strongly endotactic. No reaction present can be split (as in \cref{splitting}) while preserving the endotactic property. By \cref{containcondit}, any node added to create a new network $\tilde{\cG}$ must have $\b{0} \in \mathit{RelInt}(V^{\tilde{\cG}}(\b{s}))$. Therefore, any direction $\b{w}$ which violated the strongly endotactic conditions must still do so. \hfill $\triangle$
\end{example}

\begin{example}\label{SOnotE}
	Consider the E-graph shown in \cref{counters} (d). This network is source-only, while it is not effectively endotactic. No reaction present can be split (as in \cref{splitting}) to gain the endotactic property. By \cref{containcondit}, any node added to create a new network $\tilde{\cG}$ must have $\b{0}\in \mathit{RelInt}(V^{\tilde{\cG}}(\b{s}(e)))$. Therefore, any direction $\b{w}$ which violated the endotactic conditions must still do so. \hfill $\triangle$
\end{example}

\begin{example}\label{CnotE}
	Consider the E-graph shown in \cref{counters} (e). This network is consistent.  However, for  generic choices of rate constants, the polynomial dynamical systems generated by this network are also generated by a network with a single irreversible reaction.   Hence, the network is not effectively endotactic.  Note that the same is true for the E-graph shown in \cref{counters} (f). \hfill $\triangle$
\end{example}

\begin{example}\label{CnotSO}
	Consider the E-graph shown in \cref{counters} (f). This one-dimensional network is consistent, but it is not effectively source-only. Any system generated by this network has only a single term. Any other network $\tilde{\cG}$ which also generates such a system and contains more than one source must have additional sources which are extremal sources. However, \cref{containcondit} implies that these must have $\b{0}\in \mathit{RelInt}(V^{\tilde{\cG}}(\b{s}))$, and so must have target nodes outside of the convex hull of sources, which could therefore not be sources. \hfill $\triangle$
\end{example}

\section{Conclusion}

We have determined the extent to which different reaction networks may represent the same dynamical system when modeled with mass action kinetics. This allows us to investigate the overlap between classes of reaction networks, in the sense of dynamical equivalence and ``effective" properties. \cref{relationships} provides a summary of the relationships between classifications of networks, giving an answer to \cref{q2}. Furthermore, the graph in \cref{relationships} is complete in the sense that any additional arrows would be false, with the exception of arrows that are already implied by directed paths. 

Our answers to \cref{q1} and \cref{q2} provide a framework for the study of generic interaction networks, and indeed systems of ODEs with polynomial right hand sides, in the context of reaction network theory. Reaction network theory provides useful tools for the analysis of dynamical systems \citep{H-J1,C-F1,C-F2,W-H1,W-H2,E-T}, and an answer to \cref{q1} provides a way to extend these results to systems for which they are not immediately applicable. Our work on \cref{q2} organizes the hierarchy of the various results in reaction network theory, allowing them to be extended where appropriate.

\begin{figure}
\includegraphics[scale = 1]{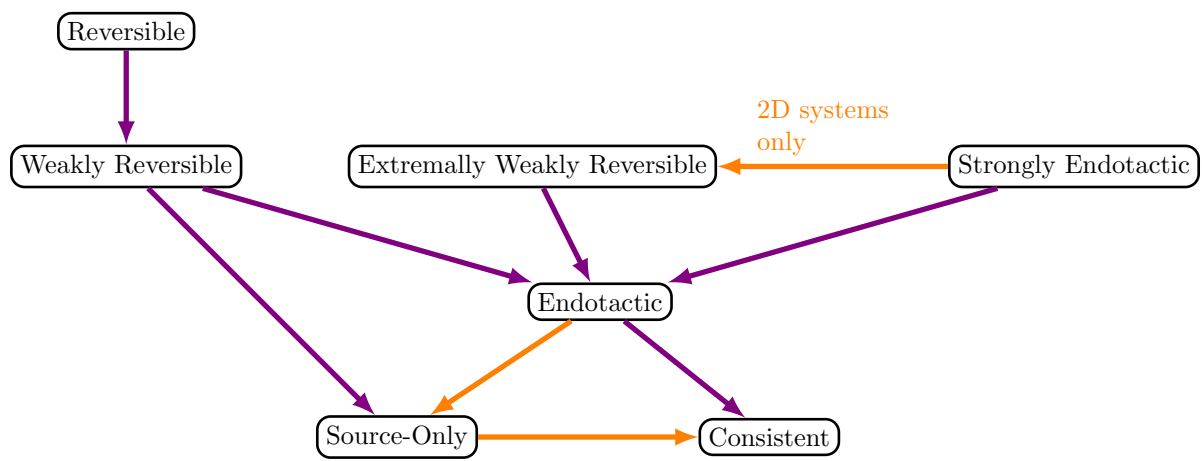}
\caption{This figure summarizes our main results on inclusions of classes of networks. Purple arrows indicate an inclusion in the family of networks, which also implies inclusion in the sense of dynamical equivalence. Network inclusions given by the purple arrows are strict at the level of networks and also at the level of dynamical equivalence.  Orange arrows indicate inclusion in the sense of dynamical equivalence only (or ``effective" properties). That is, an orange arrow indicates that a type of network at tail end is effectively the type of network at the head using \cref{sourceonly}, \cref{consis}, and \cref{wkrev}. The graph is complete in the sense that any additional paths would be false.}\label{relationships}
\end{figure}

\section{Acknowledgments}

David F. Anderson was supported by Army Research Office grant W911NF-18-1-0324. James D. Brunner was supported by the DeWitt \& Curtiss Family Foundation and the Mayo Clinic Center for Individualized Medicine. Gheorghe Craciun was partially supported by the National Science Foundation under grants DMS-1412643 and DMS-1816238. Matthew D. Johnston was supported by the Henry Woodward Fund.

\FloatBarrier

\bibliographystyle{plainnat}
\bibliography{myrefs}

\begin{thebibliography}{64}
\providecommand{\natexlab}[1]{#1}
\providecommand{\url}[1]{\texttt{#1}}
\expandafter\ifx\csname urlstyle\endcsname\relax
  \providecommand{\doi}[1]{doi: #1}\else
  \providecommand{\doi}{doi: \begingroup \urlstyle{rm}\Url}\fi

\bibitem[Agazzi and Mattingly(2018)]{AM2018}
Andrea Agazzi and Jonathan~C. Mattingly.
\newblock Seemingly stable chemical kinetics can be stable, marginally stable,
  or unstable.
\newblock \textit{arXiv preprint} 1810.06547, 2018.

\bibitem[Agazzi et~al.(2018)Agazzi, Dembo, and Jean-Pierre]{dembo2018}
Andrea Agazzi, Amir Dembo, and Eckmann Jean-Pierre.
\newblock {Large deviations theory for Markov jump models of chemical reaction
  networks}.
\newblock \emph{Ann. Appl. Prob.}, 28\penalty0 (3):\penalty0 1821--1855, 2018.

\bibitem[Anderson(2008)]{A}
David~F. Anderson.
\newblock Global asymptotic stability for a class of nonlinear chemical
  equations.
\newblock \emph{SIAM J. Appl. Math.}, 68\penalty0 (5):\penalty0 1464--1476,
  2008.

\bibitem[Anderson(2011{\natexlab{a}})]{A2011bounded}
David~F. Anderson.
\newblock Boundedness of trajectories for weakly reversible, single linkage
  class reaction systems.
\newblock \emph{Journal of Mathematical Chemistry}, 49\penalty0 (10):\penalty0
  2275--2290, 2011{\natexlab{a}}.

\bibitem[Anderson(2011{\natexlab{b}})]{A4}
David~F. Anderson.
\newblock A proof of the global attractor conjecture in the single linkage
  class case.
\newblock \emph{SIAM J. Appl. Math.}, 71\penalty0 (4):\penalty0 1487--1508,
  2011{\natexlab{b}}.

\bibitem[Anderson and Cotter(2016)]{AC2016}
David~F. Anderson and Simon~L. Cotter.
\newblock Product-form stationary distributions for deficiency zero networks
  with non-mass action kinetics.
\newblock \emph{Bull. Math. Bio.}, 78:\penalty0 2390--2407, 2016.

\bibitem[Anderson and Kim(2018)]{AK2018}
David~F. Anderson and Jinsu Kim.
\newblock Some network conditions for positive recurrence of stochastically
  modeled reaction networks.
\newblock \emph{SIAM J. Appl. Math.}, 78\penalty0 (5):\penalty0 2692--2713,
  2018.

\bibitem[Anderson and Nguyen(2019)]{AN2019}
David~F. Anderson and Tung~D. Nguyen.
\newblock Results on stochastic reaction networks with non-mass action
  kinetics.
\newblock \emph{Mathematical Biosciences and Engineering}, 16\penalty0
  (4):\penalty0 2118--2140, 2019.

\bibitem[Anderson and Shiu(2010)]{A-S}
David~F. Anderson and Anne Shiu.
\newblock The dynamics of weakly reversible population processes near facets.
\newblock \emph{SIAM J. Appl. Math.}, 70\penalty0 (6):\penalty0 1840--1858,
  2010.

\bibitem[Anderson and Yuan(2012)]{AY2019}
David~F. Anderson and Chaojie Yuan.
\newblock Time-dependent product-form {P}oisson distributions for reaction
  networks with higher order complexes.
\newblock Accepted to Journal of Mathematical Biology, 2012.

\bibitem[Anderson et~al.(2011)Anderson, Craciun, and Kurtz]{A-C-K}
David~F. Anderson, Gheorghe Craciun, and Thomas~G. Kurtz.
\newblock Product-form stationary distributions for deficiency zero chemical
  reaction networks.
\newblock \emph{Bull. Math. Biol.}, 72\penalty0 (8):\penalty0 1947--1970, 2011.

\bibitem[Anderson et~al.(2014)Anderson, Enciso, and Johnston]{AEJ2014}
David~F. Anderson, Germ\'an Enciso, and Matthew~D. Johnston.
\newblock Stochastic analysis of biochemical reaction networks with absolute
  concentration robustness.
\newblock \emph{Journal of the Royal Society Interface}, 11:\penalty0 20130943,
  2014.

\bibitem[Anderson et~al.(2015)Anderson, Craciun, Gopalkrishnan, and
  Wiuf]{ACGW:lyapunov}
David~F Anderson, Gheorghe Craciun, Manoj Gopalkrishnan, and Carsten Wiuf.
\newblock Lyapunov functions, stationary distributions, and non-equilibrium
  potential for reaction networks.
\newblock \emph{Bulletin of mathematical biology}, 77\penalty0 (9):\penalty0
  1744--1767, 2015.

\bibitem[Anderson et~al.(2018{\natexlab{a}})Anderson, Cappelletti, Kim, and
  Nguyen]{ACKN2018}
David~F. Anderson, Daniele Cappelletti, Jinsu Kim, and Tung~D. Nguyen.
\newblock Tier structure of strongly endotactic reaction networks.
\newblock \textit{arXiv preprint}:~1808.05328, 2018{\natexlab{a}}.

\bibitem[Anderson et~al.(2018{\natexlab{b}})Anderson, Cappelletti, Koyama, and
  Kurtz]{ACKK2018}
David~F. Anderson, Daniele Cappelletti, Masanori Koyama, and Thomas~G. Kurtz.
\newblock Non-explosivity of stochastically modeled reaction networks that are
  complex balanced.
\newblock \emph{Bull. Math. Biol.}, 80\penalty0 (10):\penalty0 2561--2579,
  2018{\natexlab{b}}.

\bibitem[Angeli(2009)]{angeli2009tutorial}
David Angeli.
\newblock A tutorial on chemical reaction network dynamics.
\newblock \emph{European journal of control}, 15\penalty0 (3-4):\penalty0
  398--406, 2009.

\bibitem[Banaji and Pantea(2018)]{B-P}
Murad Banaji and Casian Pantea.
\newblock The inheritance of nondegenerate multistationarity in chemical
  reaction networks.
\newblock \emph{SIAM Journal on Applied Mathematics}, 78:\penalty0 1105--1130,
  2018.

\bibitem[Berman and Plemmons(1994)]{nonneg}
Abraham Berman and Robert~J. Plemmons.
\newblock \emph{Nonnegative Matrices in the Mathematical Sciences}, volume~9 of
  \emph{Classics in Applied Mathematics}.
\newblock SIAM, 1994.

\bibitem[Boros et~al.(2019)Boros, Craciun, and Yu]{boros2019weakly}
Bal{\'a}zs Boros, Gheorghe Craciun, and Polly~Y Yu.
\newblock Weakly reversible mass-action systems with infinitely many positive
  steady states.
\newblock \emph{arXiv preprint arXiv:1912.10302}, 2019.

\bibitem[Brunner and Craciun(2018)]{rrobust}
James~D. Brunner and Gheorghe Craciun.
\newblock Robust persistence and permanence of polynomial and power law
  dynamical systems.
\newblock \emph{SIAM Journal on Applied Mathematics}, 78\penalty0 (2):\penalty0
  801--825, 2018.

\bibitem[Cappelletti and Wiuf(2016)]{CW:product}
Daniele Cappelletti and Carsten Wiuf.
\newblock Product-form poisson-like distributions and complex balanced reaction
  systems.
\newblock \emph{SIAM Journal on Applied Mathematics}, 76\penalty0 (1):\penalty0
  411--432, 2016.

\bibitem[Clarke(1980)]{clarke1980stability}
Bruce~L Clarke.
\newblock Stability of complex reaction networks.
\newblock \emph{Advances in chemical physics}, pages 1--215, 1980.

\bibitem[Conradi et~al.(2007)Conradi, Flockerzi, Raisch, and
  Stelling]{Conradi19175}
Carsten Conradi, Dietrich Flockerzi, J{\"o}rg Raisch, and J{\"o}rg Stelling.
\newblock Subnetwork analysis reveals dynamic features of complex (bio)chemical
  networks.
\newblock \emph{Proceedings of the National Academy of Sciences}, 104\penalty0
  (49):\penalty0 19175--19180, 2007.
\newblock ISSN 0027-8424.
\newblock \doi{10.1073/pnas.0705731104}.
\newblock URL \url{http://www.pnas.org/content/104/49/19175}.

\bibitem[Craciun(2019)]{gheorgheToricDI}
Gheorge Craciun.
\newblock Polynomial dynamical systems, reaction networks, and toric
  differential inclusions.
\newblock \emph{SIAM Journal on Applied Algebra and Geometry}, 3\penalty0
  (1):\penalty0 87--106, 2019.
\newblock \doi{10.1137/17M1129076}.
\newblock URL \url{https://doi.org/10.1137/17M1129076}.

\bibitem[Craciun(2016)]{Cr}
Gheorghe Craciun.
\newblock {Toric differential inclusions and a proof of the global attractor
  conjecture}.
\newblock \textit{arXiv preprint}: 1501.02860v2, 2016.

\bibitem[Craciun and Deshpande(2019)]{craciun2019endotactic}
Gheorghe Craciun and Abhishek Deshpande.
\newblock Endotactic networks and toric differential inclusions.
\newblock \emph{arXiv preprint arXiv:1906.08384}, 2019.

\bibitem[Craciun and Feinberg(2005)]{C-F1}
Gheorghe Craciun and Martin Feinberg.
\newblock Multiple equilibria in complex chemical reaction networks: {I}. the
  injectivity property.
\newblock \emph{SIAM J. Appl. Math}, 65\penalty0 (5):\penalty0 1526--1546,
  2005.

\bibitem[Craciun and Feinberg(2006)]{C-F2}
Gheorghe Craciun and Martin Feinberg.
\newblock Multiple equilibria in complex chemical reaction networks: {II}. the
  species-reaction graph.
\newblock \emph{SIAM J. Appl. Math}, 66\penalty0 (4):\penalty0 1321--1338,
  2006.

\bibitem[Craciun and Pantea(2008)]{C-P}
Gheorghe Craciun and Casian Pantea.
\newblock Identifiability of chemical reaction networks.
\newblock \emph{J. Math Chem.}, 44\penalty0 (1):\penalty0 244--259, 2008.

\bibitem[Craciun et~al.(2009)Craciun, Dickenstein, Shiu, and
  Sturmfels]{C-D-S-S}
Gheorghe Craciun, Alicia Dickenstein, Anne Shiu, and Bernd Sturmfels.
\newblock Toric dynamical systems.
\newblock \emph{J. Symbolic Comput.}, 44\penalty0 (11):\penalty0 1551--1565,
  2009.

\bibitem[Craciun et~al.(2013{\natexlab{a}})Craciun, Kim, Pantea, and
  Rempala]{craciun2013statistical}
Gheorghe Craciun, Jaejik Kim, Casian Pantea, and Grzegorz~A Rempala.
\newblock Statistical model for biochemical network inference.
\newblock \emph{Communications in Statistics-Simulation and Computation},
  42\penalty0 (1):\penalty0 121--137, 2013{\natexlab{a}}.

\bibitem[Craciun et~al.(2013{\natexlab{b}})Craciun, Nazarov, and Pantea]{C-N-P}
Gheorghe Craciun, Fedor Nazarov, and Casian Pantea.
\newblock Persistence and permanence of mass-action and power-law dynamical
  systems.
\newblock \emph{SIAM Journal on Applied Mathematics}, 73\penalty0 (1):\penalty0
  305--329, 2013{\natexlab{b}}.
\newblock \doi{10.1137/100812355}.
\newblock URL \url{https://doi.org/10.1137/100812355}.

\bibitem[Craciun et~al.(2020{\natexlab{a}})Craciun, Jin, and Yu]{craciun2018}
Gheorghe Craciun, Jiaxin Jin, and Polly~Y. Yu.
\newblock An efficient characterization of complex balanced, detailed balanced,
  weakly reversible, and reversible systems.
\newblock \emph{SIAM J. Appl. Math}, 80\penalty0 (1):\penalty0 183--205,
  2020{\natexlab{a}}.

\bibitem[Craciun et~al.(2020{\natexlab{b}})Craciun, Johnston, Szederk{\'e}nyi,
  Tonello, T{\'o}th, and Yu]{craciun2019realizations}
Gheorghe Craciun, Matthew~D Johnston, G{\'a}bor Szederk{\'e}nyi, Elisa Tonello,
  J{\'a}nos T{\'o}th, and Polly~Y Yu.
\newblock Realizations of kinetic differential equations.
\newblock \emph{Mathematical Biosciences and Engineering}, 17\penalty0
  (1):\penalty0 862--892, 2020{\natexlab{b}}.

\bibitem[Donnell et~al.(2014)Donnell, Banaji, Marginean, and Pantea]{D-B-M-P}
Pete Donnell, Murad Banaji, Anca Marginean, and Casian Pantea.
\newblock Co{N}t{R}ol: an open source framework for the analysis of chemical
  reaction networks.
\newblock \emph{Bioinformatics}, 30\penalty0 (11):\penalty0 1633--1634, 2014.

\bibitem[\'{E}rdi and T\'{o}th(1989)]{E-T}
P\'{e}ter \'{E}rdi and J\'{a}nos T\'{o}th.
\newblock \emph{Mathematical models of Chemical Reactions}.
\newblock Princeton University Press, 1989.

\bibitem[Feinberg()]{F3}
Martin Feinberg.
\newblock Lectures on chemical reaction networks.
\newblock Unpublished written versions of lectures given at the Mathematics
  Research Center, University of Wisconsin.
\newblock URL
  \url{http://www.che.eng.ohio-state.edu/~feinberg/LecturesOnReactionNetworks/}.

\bibitem[Feinberg(1972)]{F1}
Martin Feinberg.
\newblock Complex balancing in general kinetic systems.
\newblock \emph{Arch. Ration. Mech. Anal.}, 49:\penalty0 187--194, 1972.

\bibitem[Feinberg(1987)]{Fe2}
Martin Feinberg.
\newblock Chemical reaction network structure and the stability of complex
  isothermal reactors: {I.} the deficiency zero and deficiency one theorems.
\newblock \emph{Chem. Eng. Sci.}, 42\penalty0 (10):\penalty0 2229--2268, 1987.

\bibitem[Feinberg(1988)]{Fe4}
Martin Feinberg.
\newblock Chemical reaction network structure and the stability of complex
  isothermal reactors: {II.} multiple steady states for networks of deficiency
  one.
\newblock \emph{Chem. Eng. Sci.}, 43\penalty0 (1):\penalty0 1--25, 1988.

\bibitem[Feinberg(1995)]{F2}
Martin Feinberg.
\newblock The existence and uniqueness of steady states for a class of chemical
  reaction networks.
\newblock \emph{Arch. Ration. Mech. Anal.}, 132:\penalty0 311--370, 1995.

\bibitem[Gopalkrishnan et~al.(2014)Gopalkrishnan, Miller, and Shiu]{Gopal2014}
Manoj Gopalkrishnan, Ezra Miller, and Anne Shiu.
\newblock A geometric approach to the global attractor conjecture.
\newblock \emph{SIAM J. Appl. Dyn. Syst.}, 13\penalty0 (2):\penalty0 758--797,
  2014.

\bibitem[Hell and Rendall(2015)]{Hell2015DynamicalFO}
Juliette Hell and Alan~D. Rendall.
\newblock Dynamical features of the {MAPK} cascade.
\newblock \emph{arXiv preprint arXiv:1508.07822}, 2015.

\bibitem[Herges(1990)]{herges1990reaction}
Rainer Herges.
\newblock Reaction planning: prediction of new organic reactions.
\newblock \emph{Journal of chemical information and computer sciences},
  30\penalty0 (4):\penalty0 377--383, 1990.

\bibitem[Hill(1910)]{Hi}
Archibald Hill.
\newblock The possible effects of the aggregation of the molecules of
  haemoglobin on its dissociation curves.
\newblock \emph{J. Physiol.}, 40\penalty0 (4), 1910.

\bibitem[Horn(1972)]{H}
Fritz Horn.
\newblock Necessary and sufficient conditions for complex balancing in chemical
  kinetics.
\newblock \emph{Arch. Ration. Mech. Anal.}, 49:\penalty0 172--186, 1972.

\bibitem[Horn and Jackson(1972)]{H-J1}
Fritz Horn and Roy Jackson.
\newblock General mass action kinetics.
\newblock \emph{Arch. Ration. Mech. Anal.}, 47:\penalty0 187--194, 1972.

\bibitem[Johnston(2013)]{J1}
Matthew~D. Johnston.
\newblock Translated chemical reaction networks.
\newblock 2013.
\newblock Available on the arXiv at arXiv:1305.5845.

\bibitem[Johnston and Siegel(2011)]{J-S2}
Matthew~D. Johnston and David Siegel.
\newblock Linear conjugacy of chemical reaction networks.
\newblock \emph{J. Math. Chem.}, 49\penalty0 (7):\penalty0 1263--1282, 2011.

\bibitem[Johnston et~al.(2013)Johnston, Siegel, and
  Szederk{\'e}nyi]{johnston2013computing}
Matthew~D Johnston, David Siegel, and G{\'a}bor Szederk{\'e}nyi.
\newblock Computing weakly reversible linearly conjugate chemical reaction
  networks with minimal deficiency.
\newblock \emph{Mathematical biosciences}, 241\penalty0 (1):\penalty0 88--98,
  2013.

\bibitem[Joshi and Shiu(2017)]{J-S7}
Badal Joshi and Anne Shiu.
\newblock Which small reaction networks are multistationary?
\newblock \emph{SIAM Journal on Applied Dynamical Systems}, 16:\penalty0
  802--833, 2017.

\bibitem[Mangasarian(1994)]{mangasarian1994nonlinear}
Olvi~L. Mangasarian.
\newblock \emph{Nonlinear programming}.
\newblock SIAM, 1994.

\bibitem[Mezey(1982{\natexlab{a}})]{mezey1982quantum}
Paul~G Mezey.
\newblock Quantum chemical reaction networks, reaction graphs and the structure
  of potential energy hypersurfaces.
\newblock \emph{Theoretica chimica acta}, 60\penalty0 (5):\penalty0 409--428,
  1982{\natexlab{a}}.

\bibitem[Mezey(1982{\natexlab{b}})]{mezey1982topology}
Paul~G Mezey.
\newblock Topology of energy hypersurfaces.
\newblock \emph{Theoretica chimica acta}, 62\penalty0 (2):\penalty0 133--161,
  1982{\natexlab{b}}.

\bibitem[Mezey(1995)]{mezey1995reaction}
Paul~G Mezey.
\newblock From reaction path to reaction mechanism: Fundamental groups and
  symmetry rules.
\newblock In \emph{The Reaction Path in Chemistry: Current Approaches and
  Perspectives}, pages 11--38. Springer, 1995.

\bibitem[Michaelis and Menten(1913)]{M-M}
Leonor Michaelis and Maud Menten.
\newblock Die kinetik der invertinwirkung.
\newblock \emph{Biochem. Z.}, 49:\penalty0 333--369, 1913.

\bibitem[Pantea(2012)]{Pa}
Casian Pantea.
\newblock On the persistence and global stability of mass-action systems.
\newblock \emph{SIAM Journal on Mathematical Analysis}, 44:\penalty0
  1636--1673, 2012.

\bibitem[Shinar and Feinberg(2010)]{Sh-F}
Guy Shinar and Martin Feinberg.
\newblock Structural sources of robustness in biochemical reaction networks.
\newblock \emph{Science}, 327\penalty0 (5971):\penalty0 1389--1391, 2010.

\bibitem[Szederk\'{e}nyi(2009)]{sz2009}
G\'{a}bor Szederk\'{e}nyi.
\newblock Comment on ``{I}dentifiability of chemical reaction networks'' by
  {G}. {C}raciun and {C}. {P}antea.
\newblock \emph{J. Math. Chem.}, 45:\penalty0 1172--1174, 2009.

\bibitem[Szederk\'{e}nyi(2010)]{Sz2}
Gabor Szederk\'{e}nyi.
\newblock Computing sparse and dense realizations of reaction kinetic systems.
\newblock \emph{J. Math. Chem.}, 47:\penalty0 551--568, 2010.

\bibitem[Vol'pert and Hudjaev(1985)]{V-H}
Aizik~I. Vol'pert and Sergei~I. Hudjaev.
\newblock \emph{Analysis in Classes of Discontinuous Functions and Equations of
  Mathematical Physics}.
\newblock Martinus Nijhoff Publishers, Dordrecht, Netherlands, 1985.

\bibitem[Wilhelm and Heinrich(1995)]{W-H1}
Thomas Wilhelm and Reinhart Heinrich.
\newblock Smallest chemical reaction system with {H}opf bifurcations.
\newblock \emph{J. Math. Chem.}, 17\penalty0 (1):\penalty0 1--14, 1995.

\bibitem[Wilhelm and Heinrich(1996)]{W-H2}
Thomas Wilhelm and Reinhart Heinrich.
\newblock Mathematical analysis of the smallest chemical reaction system with
  {H}opf bifurcation.
\newblock \emph{J. Math. Chem.}, 19\penalty0 (2):\penalty0 111--130, 1996.

\bibitem[Yu and Craciun(2018)]{yu2018}
Polly~Y. Yu and Gheorghe Craciun.
\newblock Mathematical analysis of chemical reaction systems.
\newblock \emph{Israel Journal of Chemistry}, 58\penalty0 (6-7):\penalty0
  733--741, 2018.

\end{thebibliography}

\end{document}